\def\subneq{\mathop{\raise 0.7ex \hbox{$\subset$}}\!\!\!\!\!\!{\raise -0.6ex\hbox{$\neq$}}\,}
\def\u{\underline}
\def\QQ{{\ \rlap {\raise 0.4ex \hbox{$\scriptscriptstyle |$}}\hskip -0.2em Q}}
\def\1{{1\hskip-0.25em{\rm l}}}
\def\CC{{\ \rlap{\raise 0.4ex \hbox{$\scriptscriptstyle |$}}\hskip -0.2em C}}
\def\sobre#1#2{\lower 1ex \hbox{ $#1 \atop #2 $ } }
\def\bajo#1#2{\raise 1ex \hbox{ $#1 \atop #2 $ } }
\def\ep{\varepsilon}
\def\p{\partial}
\def\u2{{u^\ep \over \ep^2 }}
\def\u3{{\displaystyle {\bar u}^\ep \over \ep^2 }}
\begin{document}

\markboth{Eduard Feireisl, Philippe Lauren\c{c}ot and Andro Mikeli\'c}{Global-in-time solutions for the isothermal Matovich-Pearson equations}

\newtheorem{theorem}{Theorem}
\newtheorem{lemma}{Lemma}
\newtheorem{proposition}{Proposition}
\newtheorem{corollary}{Corollary}
\newtheorem{remark}{Remark}
\newtheorem{definition}{Definition}
\title{Global-in-time solutions for the isothermal Matovich-Pearson equations \thanks{This paper was partly written during the visit of Ph.L. and A.M.  at {\bf Ne\v cas Center for Mathematical Modelling} in March/April 2008  and they wish to express their thanks for the hospitality during  the stay.  }}

\author{\bf Eduard Feireisl \thanks{The work of E.F. was supported by Grant 201/09/0917 of GA \v CR as a part of the general research programme of the Academy of Sciences of the Czech Republic, Institutional Research Plan AV0Z10190503.}\\ Institute of Mathematics of the Academy of Sciences\\ of the Czech Republic, \v Zitn\' a 25, 115 67 Praha 1, \\ Prague, CZECH REPUBLIC\\ E-mail: {\tt feireisl@math.cas.cz}  \and \bf Philippe Lauren\c{c}ot \\ Institut de Math\'ematiques de Toulouse, CNRS UMR~5219\\ Universit\'e de Toulouse, F--31062 Toulouse Cedex 9, FRANCE\\ E-mail: {\tt laurenco@math.univ-toulouse.fr} \and \bf  Andro Mikeli\'c \thanks{Research of A.M. on the mathematical modeling of glass fiber drawing was supported in part by the EZUS LYON 1 INGENIERIE during the period 2004 -- 2007.} \\ Universit\'e de Lyon, Lyon, F-69003, FRANCE ;\\ Universit\'e Lyon 1, Institut Camille Jordan, CNRS UMR 5208,\\  D\'epartement de  Math\'{e}matiques, 43, Bd du 11 novembre 1918,\\ 69622 Villeurbanne Cedex,  FRANCE \\ E-mail: {\tt Andro.Mikelic@univ-lyon1.fr}}

\maketitle

%%%%%%%%%%%%%%%%%%%%%%%%%%%%%%%%%%%%%%%%%%%%%%%%%%%%%%%%%%%%%%%%%%%%
\begin{abstract}
In this paper we study the Matovich-Pearson equations describing the process of glass fiber drawing. These equations may be viewed as a 1D-reduction of the incompressible Navier-Stokes equations including  free boundary, valid for the drawing of a long and thin glass fiber. We concentrate on the isothermal case without surface tension. Then the Matovich-Pearson equations represent a nonlinearly coupled system of an elliptic equation for the axial velocity and a hyperbolic transport equation for the fluid cross-sectional area. We first prove existence of a local solution, and, after constructing appropriate barrier functions, we deduce that the fluid radius is always strictly positive and that the local solution remains in the same regularity class. To the best of our knowledge, this is the first global existence and uniqueness result for this important system of equations.

\bigskip

{\bf Keywords:} Fiber drawing; Matovich-Pearson equations; incompressible free boundary Navier-Stokes equations;  non-local transport equation; iterated comparison; barrier function.

\bigskip
%\end{keywords}
{\bf Mathematics Subject Classification MSC2000 (AMS classcode):} 35Q30;  35Q35; 35R35; 35L80; 76D05; 76D27

%{\bf PACS 2010 :}  02.30.Jr 	Partial differential equations ; 47.10.ad 	Navier-Stokes equations; 47.15.Uv 	Laminar %jets; 47.85.M- 	Material processing flows; industrial applications; 47.85.mf 	Lubrication flows

\end{abstract}
%%%%%%%%%%%%%%%%%%%%%%%%%%%%%%%%%%%%%%%%%%%%%%%%%%%%%%%%%%%%%%%%%%%%

%%%%%%%%%%%%%%%%%%%%%%%%%%%%%%%%%%%%%%%%%%%%%%%%%%%%%%%%%%%%%%%%%%%%
%%%%%%%%%%%%%%%%%%%%%%%%%%%%%%%%%%%%%%%%%%%%%%%%%%%%%%%%%%%%%%%%%%%%
\section{Introduction}
%%%%%%%%%%%%%%%%%%%%%%%%%%%%%%%%%%%%%%%%%%%%%%%%%%%%%%%%%%%%%%%%%%%%
%%%%%%%%%%%%%%%%%%%%%%%%%%%%%%%%%%%%%%%%%%%%%%%%%%%%%%%%%%%%%%%%%%%%

The drawing of continuous glass fibers is a widely used procedure. Industrial glass fibers are manufactured by a bushing with more than thousand nozzles. Bushings are supplied with a molten glass from a melting furnace. Its temperature ranges from $1300K$ to $1800K$. In order to understand the glass fiber forming process, it is important to study the drawing of a single glass fiber. This is, of course, a significant simplification because we disregard interaction between fibers and between fibers and the surrounding air. For a single glass fiber, the hot glass melt is forced by gravity to flow through a die into air. After leaving the die, the molten glass forms a free liquid jet. It is cooled and attenuated as it proceeds through the air. Finally, the cold fiber is collected on a rotating drum.

The molten glass can be considered as a Newtonian fluid and the process is described by the non-isothermal Navier-Stokes equations for a thermally dilatable but isochoric fluid. Since we deal with a free liquid jet, the problem is posed as a free boundary problem for the Navier-Stokes equations, coupled with the energy equation. We refer to \cite{FFM:07} for detailed modeling and analysis of the equations describing the stationary flow inside the die. There are several models proposed to describe the various
stages of the flow of a molten glass from the furnace to the winding spool: the slow flow in the die (the ``first phase" of the drawing), the jet formation under rapid cooling (the ``second phase"), and the terminal fiber profile (the ``third phase") (see  \cite{FMF:09}).

Since we consider long (their length is approximately $10$ m) and thin (their diameter varies from $1$ mm to $10$ $\mu$m) fibers, it is reasonable to apply the lubrication approximation to the model equation. This approach yields good results, at least for flows far from the die exit and in the so-called ``third phase" of the fiber drawing.

A standard engineering model for the isothermal glass fiber drawing in the ``third phase" is represented by the \emph{Matovich-Pearson equations}.  For an axially symmetric fiber with a straight central line, they read
\begin{equation}\label{Matov}
\partial_t  \textbf{A} + \partial_x (v \textbf{A}) =0 ; \qquad \partial_x  (3 \mu (T) \textbf{A}\ \partial_x v) + \partial_x  ( \sigma (T) \sqrt{\textbf{A}} ) =0,
\end{equation}
where $\textbf{A} = \textbf{A} (t,x)$ is the cross-area of the fiber section, $v = v(t,x)$ is the effective axial velocity, $3 \mu  $ is Trouton's viscosity, and $\sigma$ denotes the surface tension. As the coefficients $\mu$ and $\sigma$ depend on the temperature, it is necessary to take into account an equation for the temperature $T = T (t,x)$.\vskip2pt

The original derivation of the system (\ref{Matov}) is purely heuristic and obtained under the assumptions that: \textit{(i)}  the viscous forces dominate the inertial ones; \textit{(ii)} the effect of the surface tension is balanced with the normal stress at the free boundary; \textit{(iii)} the heat conduction is small compared with the heat convection in the fiber; \textit{(iv)} the fiber is almost straight, and all quantities are axially symmetric. We refer to the classical papers by Kase \& Matsuo \cite{KaseMat:65, KaseMat:67}, and Matovich \& Pearson \cite{MatPear:69} for more details concerning the model.

Another derivation of the model based on a lubrication type asymptotic expansion can be found in the work by Schultz \textit{et al.} \cite{GuSch:98,  SchuDa:82}, Dewynne \textit{et al.} \cite{DeOckWil:92,DeHowWil:94},  and Hagen \cite{HagenZAMM02}, with more emphasis on the mathematical aspects of the problem. The (formal) asymptotic expansion is developed with respect to a small parameter $\varepsilon$, proportional to the ratio of the characteristic thickness $R_E$ in the radial direction and the characteristic axial length of the fiber $L$. The fact that the viscosity changes over several orders of magnitude is surprisingly ignored in these studies. As a matter of fact, the viscosity coefficient depends effectively on the temperature, with values varying from $10$ to $10^{12}$ Pa sec, while in the above mentioned asymptotic expansions it is considered simply of order one. A correct formal derivation was given in \cite{CloFaFaMik:08}, and it is in full agreement with the model announced in \cite{HagenZAMM02}.  Finally, a full non-stationary model of a thermally dilatable molten glass, with density depending on the temperature, was derived in \cite{FMF:09}.

A mathematical analysis of generalized \emph{stationary} Matovich-Pearson equations is performed in  \cite{CloFaFaMik:08} (see also \cite{DewOckWil:89}). The non-stationary case, without surface tension and with advection equation for the temperature, is studied by Hagen \& Renardy  \cite{HagenRenardy99}. They prove a local-in-time existence result in the class of smooth solutions.
Their approach is based on a precise analysis of the dependence of the solution of the mass conservation equation on the velocity. This method  requires controlling higher order Sobolev norms in the construction of solutions by means of an iterative procedure and works only for short time intervals. Hagen \textit{et al.} \cite{HagenRenardy01, Hagen05, HagenJEE05} have also undertaken a detailed study of the linearized equations of  forced elongation. Despite this considerable effort, global-in-time solvability of the Matovich-Pearson equations was left open.

The ratio $\sigma /\mu$ is small, and, furthermore, the inertia and gravity effects are negligible in most applications. Accordingly, we consider the Matovich-Pearson equations (\ref{Matov}) with $\sigma =0$, meaning, the isothermal drawing with constant positive viscosity and in the absence of surface tension. For a prescribed velocity at the fiber end points, the important parameter is the {\it draw ratio}, being the ratio between the outlet and inlet fluid velocities. It is well known that the instability known as a draw resonance occurs at draw ratios in excess of about $20.2$. Linear stability analysis was rigorously undertaken by Renardy \cite{Renardy:06}. Moreover, in \cite{Yarin99}, it was established that the cross section, given by the Matovich-Pearson equations with $\sigma=0$, may vary chaotically at a draw ratio higher than $30$, under the condition of periodic variations of the input cross section. There are also numerous articles devoted to numerical simulations confirming such a conclusion. Fairly complete simulations can be found in the papers by Gregory Forest \& Zhou \cite{GforestZhou:01, ZhouForest05}. Their simulations predict various aspects of the physical process, like a linearized stability principle, bounds on the domain of convergence for linearly stable solutions, and transition to instability. Their analysis completes that of \cite{GeyHomsy:80}.

The above mentioned simplification of system (\ref{Matov}) is briefly discussed in \cite{HagenNA05}, however, without any rigorous proofs. Our idea is to use the particular structure of the system with $\sigma =0$, and to prove short-time existence of smooth solutions satisfying good  uniform estimates. Then, performing a qualitative analysis  of the solutions and constructing  appropriate barrier functions, we show that the cross-section area remains bounded below away from zero. This observation allows us to deduce existence as well as uniqueness of global-in-time solutions.

%%%%%%%%%%%%%%%%%%%%%%%%%%%%%%%%%%%%%%%%%%%%%%%%%%%%%%%%%%%%%%%%%%%%
%%%%%%%%%%%%%%%%%%%%%%%%%%%%%%%%%%%%%%%%%%%%%%%%%%%%%%%%%%%%%%%%%%%%
\section{Isothermal fiber drawing without surface tension}
%%%%%%%%%%%%%%%%%%%%%%%%%%%%%%%%%%%%%%%%%%%%%%%%%%%%%%%%%%%%%%%%%%%%
%%%%%%%%%%%%%%%%%%%%%%%%%%%%%%%%%%%%%%%%%%%%%%%%%%%%%%%%%%%%%%%%%%%%

We study the system of equations
\begin{gather}
\partial_t A + \partial_x(v A) =0 \quad \mbox{ in } \quad Q_T = (0,T)\times (0,L), \label{Mat1} \\
\partial_x (3\mu A\ \partial_x v ) =0 \quad \mbox{ in } \quad Q_T = (0,T)\times (0,L), \label{Mat2}
\end{gather}
supplemented with the boundary and initial conditions
\begin{gather}
    A(t,0) = S_0 (t) \; \mbox{ in } \; (0,T), \quad A(0,x)= S_1 (x) \quad \mbox{ in } \; (0,L), \label{Mat3} \\
    v(t,0) = v_{in} (t) \; \mbox{ in } \; (0,T), \quad v(t, L)= v_L (t) \quad \mbox{ in } \; (0,T). \label{Mat4}
\end{gather}
Here $v$ is the axial velocity and $A$ denotes the cross section, $L$,  $T$ are given positive numbers, and $3 \mu > 0$ denotes Trouton's viscosity assumed to be constant.

The data satisfy
\begin{equation} \label{Data1-}
\left\{ \begin{array}{l}
0 < v_m \leq v_{in} (t) < v_L(t) \leq V_M \ \mbox{for any} \ t \in (0,T),\\ \\
0 < S_m \leq S_0(t), S_1(x) \leq S_M \ \mbox{for all} \ (t,x) \in Q_T,\ S_0(0) = S_1(0).
\end{array}
\right.
\end{equation}
Moreover, the functions $v_{in}$, $v_L$, $S_0$, $S_1$ belong to certain regularity classes specified below.

%%%%%%%%%%%%%%%%%%%%%%%%%%%%%%%%%%%%%%%%%%%%%%%%%%%%%%%%%%%%%%%%%%%%
%%%%%%%%%%%%%%%%%%%%%%%%%%%%%%%%%%%%%%%%%%%%%%%%%%%%%%%%%%%%%%%%%%%%
\subsection{A priori bounds}
%%%%%%%%%%%%%%%%%%%%%%%%%%%%%%%%%%%%%%%%%%%%%%%%%%%%%%%%%%%%%%%%%%%%
%%%%%%%%%%%%%%%%%%%%%%%%%%%%%%%%%%%%%%%%%%%%%%%%%%%%%%%%%%%%%%%%%%%%

Our construction of global-in-time solutions is based on certain {\it a priori} estimates that hold, formally, for any smooth solution
of problem (\ref{Mat1}) - (\ref{Mat4}), with the cross-section area $A > 0$.
The crucial observation is that, as a direct consequence of (\ref{Mat2}),
\begin{equation} \label{pom1}
A(t,x) \ \partial_x v (t,x) = \chi (t)\ \mbox{for any} \ t\in (0,T),
\end{equation}
where $\chi$ is a function of the time variable only. Moreover, as $A$ is positive and the axial velocity satisfies the boundary conditions
(\ref{Data1-}), we deduce that
\begin{equation} \label{pom2}
\chi (t) > 0 \ \mbox{for any} \ t\in (0,T),
\end{equation}
which in turn implies
\begin{equation}\label{pom3}
\partial_x v (t,x) > 0 \ \mbox{for all}\ (t,x)\in Q_T.
\end{equation}
Accordingly,
\[
v_{in}(t) < v(t,x) < v_L (t) \ \mbox{for all} \ (t,x)\in Q_T.
\]

Next, we rewrite equation (\ref{Mat1}) in the form
\[
\partial_t A + v \partial_x A = - \chi \le 0
\]
yielding
\begin{equation} \label{pom4}
A(t,x) \leq S_M \ \mbox{for all} \ (t,x)\in Q_T.
\end{equation}

 Integrating (\ref{pom1}) over $(0,L)$ and using (\ref{Mat4}) and (\ref{pom4}) give rise to the uniform bound
\begin{equation} \label{pom5}
0 < \chi(t) < \frac{ v_L (t) - v_{in} (t) }{L} S_M \ \mbox{for all} \ t\in (0,T).
\end{equation}

In order to deduce a lower bound for the cross section area $A$, we first observer that $A$ and $\partial_x A$ satisfy
the same transport equation, namely,
\begin{gather}
\partial_t A + \partial_x (v A) =0, \label{pom6} \\
\partial_t \left( \partial_x A \right)  + \partial_x \left( v \left( \partial_x A \right) \right) =0. \label{pom7}
\end{gather}
In particular,
\begin{equation} \label{pom9}
\partial_t \left( \partial_x \log(A) \right)  + v \partial_x \left( \partial_x \log(A) \right) =0.
\end{equation}

Evaluating the boundary values of $\partial_x \log(A)$  with (\ref{Mat3}), namely,
\[
\partial_x \log(A) (t,0) = \frac{\partial_x A(t,0)}{S_0 (t)}, \ \partial_x \log(A) (0,x) = \frac{\partial_x S_1(x)}{S_1 (x)},
\]
where, in accordance with (\ref{Mat1}), (\ref{Mat3}), (\ref{Mat4}), (\ref{pom1}), and (\ref{pom5})
\begin{eqnarray*}
\partial_x A(t,0) & = & - \frac{1}{v_{in}(t)}\ \left( \chi(t) + \frac{dS_0}{dt} (t) \right) \\
& \ge & - \frac{1}{v_{in}(t)}\ \left( \frac{v_L(t)-v_{in}(t)}{L} S_M + \frac{dS_0}{dt} (t)\right) .
\end{eqnarray*}
We deduce easily the desired lower bound on $A$
\begin{equation} \label{pom10}
A(t,x) \geq {A}_m > 0 \ \mbox{for all} \ (t,x)\in Q_T,
\end{equation}
where the constant $A_m$ is determined solely in terms of $v_m$, $V_M$, $S_m$, $S_M$, and the first derivatives of $S_0$, $S_1$.

The {\it a priori} bounds derived in (\ref{pom1}) - (\ref{pom10}) form a suitable platform for the existence theory developed in the remaining part of this paper.

%%%%%%%%%%%%%%%%%%%%%%%%%%%%%%%%%%%%%%%%%%%%%%%%%%%%%%%%%%%%%%%%%%%%
%%%%%%%%%%%%%%%%%%%%%%%%%%%%%%%%%%%%%%%%%%%%%%%%%%%%%%%%%%%%%%%%%%%%
\section{Short time existence of regularized strong solutions}
%%%%%%%%%%%%%%%%%%%%%%%%%%%%%%%%%%%%%%%%%%%%%%%%%%%%%%%%%%%%%%%%%%%%
%%%%%%%%%%%%%%%%%%%%%%%%%%%%%%%%%%%%%%%%%%%%%%%%%%%%%%%%%%%%%%%%%%%%

In addition to (\ref{Data1-}), we shall assume that
\begin{equation}\label{Data1}
S_0 \in W^{2,\infty} (0,T) , \quad  S_1 \in H^2 (0,L), \quad v_{in} , v_L \in W^{1, \infty} (0,T),
\end{equation}
where the symbol $W^{k,p}$ denotes the standard Sobolev space of functions having $k-$derivatives $L^p-$integrable, and
$H^2 \equiv W^{2,2}$.  For further use, we introduce the notation
\begin{equation}\label{Compcte}
Q^{0, 0} = -\frac{d S_0}{d t} (0) - v_{in} (0) \frac{d S_1}{d x} (0).
\end{equation}

Let us begin with a list of definitions:

%%%%%%%%%%%%%%%%%%%%%%%%%%%%%%%%%%%%%%%%%%%%%%%%%%%%%%%%%%%%%%%%%%%%
\begin{definition} Let $t_0$ be a positive number. A pair $(A, v)$, defined on $Q_{t_0} = (0, t_0 ) \times (0,L)$,  is a strong solution of (\ref{Mat1})-(\ref{Mat4}) if
\begin{gather}
A\in W^{1, \infty} ((0,t_0) \times (0,L)), \label{Msol1a} \\
v ,\ \partial_t v,\ \partial_x v,\ \partial_{tx}^2 v,\ \partial_x^2 v \in L^\infty((0,t_0) \times (0,L)), \label{Msol1b} \\
(A, v) \; \; \mbox{ satisfy equations } (\ref{Mat1})-(\ref{Mat2})\ \mbox{a. e. in} \ (0,t_0) \times (0,L), \label{Msol2} \\
A > 0 \; \; \mbox{ on } \; Q_{t_0} \; \mbox{ and } \; A \; \mbox{ satisfies }  (\ref{Mat3}) \; \mbox{ pointwise }, \label{Msol3} \\
v \; \;  \mbox{ satisfies } (\ref{Mat4}) \; pointwise. \label{Msol4}
\end{gather}
\end{definition}
%%%%%%%%%%%%%%%%%%%%%%%%%%%%%%%%%%%%%%%%%%%%%%%%%%%%%%%%%%%%%%%%%%%%

%%%%%%%%%%%%%%%%%%%%%%%%%%%%%%%%%%%%%%%%%%%%%%%%%%%%%%%%%%%%%%%%%%%%
\begin{definition} \label{set1} Let $\ t_0 $ be a constant, $0< t_0 \leq T$.
For $h \in  L^{\infty} (0, t_0 ; H^1 (0,L))$  with $\partial_x h \in L^\infty(0,L;L^2 (0, t_0))$, we define the energy functional $\mathcal{E}$ as
\begin{gather}
\mathcal{E}(h)^2  = {\rm ess}\sup_{0 < t < t_0 } || h(t, \cdot) ||_{H^1 (0,L)}^2 + v_m \ {\rm ess} \sup_{0 < x < L }  ||  \partial_x h(\cdot , x) ||_{L^2 (0, t_0 )}^2 \label{Energ1}.
\end{gather}

Similarly, the radius $R$ is defined by
\begin{equation}\label{Radius}
 \frac{1}{8} R^2 =  \int^L_0 \left\{ S_1^2 + \left( \frac{d S_1 }{dx} \right)^2 \right\} ( x ) \ dx +  \int^T_0  v_{in}(t) \left\{ S_0^2 + \frac{2}{ v_{in}^2} \left| \frac{ d S_0 }{dt} \right|^2 \right\} (t) \ dt .
\end{equation}
\end{definition}
%%%%%%%%%%%%%%%%%%%%%%%%%%%%%%%%%%%%%%%%%%%%%%%%%%%%%%%%%%%%%%%%%%%%

%%%%%%%%%%%%%%%%%%%%%%%%%%%%%%%%%%%%%%%%%%%%%%%%%%%%%%%%%%%%%%%%%%%%
\begin{definition} \label{set2}
For $R$  given by (\ref{Radius}) and $0< t_0 \leq T$, we denote by $\mathcal{S} (t_0 , R)$ the convex set of nonnegative functions $h$ defined on $Q_{t_0}$ such that
\begin{gather}
    h \in W^{1, \infty} (0, t_0 ; L^2 (0,L))\cap L^{\infty} (0, t_0 ; H^1 (0,L)) \cap  \notag \\
    \cap  W^{1, \infty} (0, L; L^2 ( 0, t_0))  \cap L^{\infty} (0, L;H^1 ( 0, t_0)),
  \label{Conv1} \\
\mathcal{E} (h) \leq R, \quad \mbox{and} \notag \\
{\rm ess}  \sup_{ 0 < t < t_0 }   || \p_t h (t,\cdot) ||_{L^2 (0,L)}
  + \sup_{x\in [0, L] } || \p_t h (\cdot , x) ||_{L^2 (0, t_0)}  \leq \notag \\
 R  V_M \left( 2+\frac{1}{L} + \frac{\sqrt{T}}{L^{3/2}} \right) +  | Q^{0,0} | \left( \sqrt{T} + \sqrt{L} \right)  \label{Conv2} \\
h(0, x) = S_1 (x) \quad \mbox{ and } \quad h(t, 0) = S_0 (t). \label{Conv3}
\end{gather}
\end{definition}
%%%%%%%%%%%%%%%%%%%%%%%%%%%%%%%%%%%%%%%%%%%%%%%%%%%%%%%%%%%%%%%%%%%%

%%%%%%%%%%%%%%%%%%%%%%%%%%%%%%%%%%%%%%%%%%%%%%%%%%%%%%%%%%%%%%%%%%%%
\begin{definition} \label{set3}
For $R$  given by (\ref{Radius}),
$$
 \alpha = \frac{4}{v_m} |Q^{0,0}|^2 + 4S_M L |Q^{0,0}| ,\quad \beta = \frac{4 V_M^2 R^2}{v_m L^3} + 4 S_M^2 V_M + \frac{4 S_M V_M R}{\sqrt{L}} ,
$$
we denote $t^* \in (0,T]$ a positive time satisfying
\begin{gather}
t^* \le \frac{S_m L^{3/2}}{4RV_M} \quad \mbox{and} \quad  t^*\leq \frac{R^2}{8(\alpha+\beta)}    . \label{Small2}
%\ \quad \mbox{and}  \quad %\label{Boundt} \\
 %\frac{\max_{\tau \in [0,T]} v_L^2 (\tau)}{ L^{2}} ( \frac{4}{L \min_{\tau \in [0,T]} v_{in} (\tau ) } + 1) \bigg)
 % e^{B t^* }  (\frac{1}{2}+ \frac{A}{B}) (1- e^{-B t^* } )
   %|| v_L ||_{L^\infty (0,T)} S_M}{S_m L})}< \frac{1}{2}
\end{gather}
\end{definition}
%%%%%%%%%%%%%%%%%%%%%%%%%%%%%%%%%%%%%%%%%%%%%%%%%%%%%%%%%%%%%%%%%%%%

In this section, for $\delta>0$ small enough, we construct a family of approximate solutions $(A^\delta,v^\delta)$ solving the following initial-boundary value problem:
\begin{gather}
\partial_t A^\delta + \partial_x (v^\delta A^\delta ) =0 \quad \mbox{ in } \quad Q_{t_0}^{\delta} = (\delta ,t_0 ) \times (0,L) , \label{MatShortReg1} \\
\partial_t A^\delta + v^\delta \partial_x A^\delta + Q^{0,0} \left( 1-\frac{t}{\delta} \right) + \notag \\
\frac{v_L (\delta ) - v_{in} (\delta )}{\displaystyle \int^L_0 \frac{d\xi }{A^\delta (\delta , \xi )}} \frac{t}{\delta} =0 \quad \mbox{ in } \quad  (0, \delta ] \times (0,L) , \label{MatShortReg1a} \\
    \partial_x (3\mu A^\delta \ \partial_x v^\delta ) =0 \quad \mbox{ in } \quad Q_{t_0} = (0, t_0)\times (0,L), \label{MatShortReg2} \\
    A^\delta (t,0) = S_0 (t) \; \mbox{ in } \; (0,T), \quad A^\delta (0,x)= S_1 (x) \quad \mbox{ in } \; (0,L), \label{MatShortReg3} \\
    v^\delta (t,0) = v_{in} (t) \; \mbox{ in } \; (0,T), \quad v^\delta (t, L)= v_L (t) \quad \mbox{ in } \; (0,T). \label{MatShortReg4}
\end{gather}

Specifically, we prove the following result.

%%%%%%%%%%%%%%%%%%%%%%%%%%%%%%%%%%%%%%%%%%%%%%%%%%%%%%%%%%%%%%%%%%%%
\begin{theorem}\label{SmallTimeExist} Let $v_L, v_{in} , S_0 $ and $S_1$ satisfy (\ref{Data1}).  Consider $t^* >0$ given by (\ref{Small2}), $t_0\in (0,t^*)$, and let $\delta \in (0, \min{\{1,t_0\}} )$ be a small number satisfying
\begin{equation}\label{DeltaCo}
    \frac{S_m}{12}  - \delta | Q^{0,0} | >0.
\end{equation}

Then the initial-boundary value problem (\ref{MatShortReg1}) - (\ref{MatShortReg4}) possesses a unique solution $(A^\delta,v^\delta)$ in the class
\begin{gather}
    A^\delta \in C^1 ([0, t_0 ] ; H^1 (0, L)) \cap C([0, t_0 ]; H^2 (0,L) ) ,\label{SolMatReg1}\\
  v^\delta \in C^1 ([0, t_0 ] ; H^2 (0, L)) \cap C([0, t_0 ]; H^3 (0,L) ).\label{SolMatReg2}
\end{gather}
\end{theorem}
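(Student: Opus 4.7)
The plan is a fixed-point construction on the convex set $\mathcal{S}(t_0, R)$ from Definition \ref{set2}. Given a candidate cross-section $\bar A \in \mathcal{S}(t_0, R)$, integrating the elliptic equation \eqref{MatShortReg2} in $x$ yields $\bar A\, \partial_x v = c(t)$, and matching the two Dirichlet values in \eqref{MatShortReg4} forces
\[
c(t) = \frac{v_L(t) - v_{in}(t)}{\displaystyle\int_0^L \frac{d\xi}{\bar A(t,\xi)}}, \qquad v(t,x) = v_{in}(t) + c(t)\int_0^x \frac{d\xi}{\bar A(t,\xi)}.
\]
The membership of $\bar A$ in $\mathcal{S}(t_0,R)$ combined with \eqref{DeltaCo} forces a uniform positive lower bound on $\bar A$ on $[0,\delta]$, which in turn keeps the denominator bounded away from zero and gives $v = v[\bar A]$ in $W^{1,\infty}$ with $v_{in}(t) \le v(t,x) \le v_L(t)$. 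I would then define $\Phi(\bar A)$ by plugging $v[\bar A]$ into the regularized transport problem: on $(0,\delta]$ solve \eqref{MatShortReg1a} with the explicit time-linear source, and on $(\delta, t_0)$ continue with the conservative form \eqref{MatShortReg1} starting from the trace at $t = \delta$. The regularization is engineered precisely so that at the corner $(0,0)$ the compatibility $S_0'(0) + v_{in}(0)\,S_1'(0) + Q^{0,0} = 0$ from \eqref{Compcte} becomes automatic, allowing a solution in the classes \eqref{SolMatReg1}-\eqref{SolMatReg2} without a boundary-layer loss of regularity.

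The central step is showing that $\Phi$ stabilizes $\mathcal{S}(t_0, R)$ when $t_0 \le t^*$. By the method of characteristics, one has an explicit representation for $\Phi(\bar A)$, and differentiating this representation in $x$ and in $t$ gives control of $\partial_x A$ and $\partial_t A$ simultaneously in $L^\infty_t L^2_x$ and in $L^\infty_x L^2_t$. Using $\mathcal{E}(\bar A) \le R$ and the bounds on $v[\bar A]$, each contribution involving time integration is absorbed either by the slack $R^2/8$ built into \eqref{Radius} or by the small time factor, which is exactly what the inequality $t^* \le R^2/(8(\alpha+\beta))$ in \eqref{Small2} was calibrated for. Positivity of $\Phi(\bar A)$ and the pointwise lower bound on $A^\delta$ needed to make the next iterate well-defined follow from the same characteristic formula: the source $|Q^{0,0}(1 - t/\delta) + c(\delta)(t/\delta)|$ is controlled on $(0,\delta]$ by $|Q^{0,0}|$, and \eqref{DeltaCo} guarantees $A^\delta \ge S_m/12$ there; then the first inequality in \eqref{Small2} propagates a positive lower bound through $(\delta, t^*)$.

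Uniqueness and existence of a fixed point would follow by showing that $\Phi$ is a strict contraction in the weaker norm $L^\infty(0,t_0; L^2(0,L))$, possibly after shrinking $t_0$. The velocity map $\bar A \mapsto v[\bar A]$ is Lipschitz in this norm, directly from the Lipschitz dependence of $\xi \mapsto 1/\xi$ on compact subsets of $(0,\infty)$, so the difference of two iterates solves a linear transport equation with $W^{1,\infty}$ velocity, a Lipschitz-small perturbation of the velocity, and a Lipschitz-small source; the classical $L^2$ stability estimate closes the contraction. The higher regularity in \eqref{SolMatReg1}-\eqref{SolMatReg2} is recovered by bootstrap: once $A^\delta \in L^\infty_t H^1_x$, the elliptic equation \eqref{MatShortReg2} promotes $v^\delta$ to $C([0,t_0]; H^3(0,L))$, and differentiating the transport equation once more in $x$ upgrades $A^\delta$ to $C([0,t_0]; H^2(0,L)) \cap C^1([0,t_0]; H^1(0,L))$.

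The main obstacle is the calibration in the second step: the norms defining $\mathcal{S}(t_0, R)$ deliberately mix $L^\infty_t H^1_x$ with $L^\infty_x H^1_t$ in an unusual way, and the constants $\alpha$, $\beta$, $t^*$ of Definition \ref{set3} must be chosen so that when the characteristic representation of $\Phi(\bar A)$ is inserted into $\mathcal{E}(\cdot)$ every cross term actually closes against $R$. Keeping simultaneous track of both orientations along the characteristics while maintaining the pointwise lower bound on $A^\delta$ needed to invert $\bar A$ is where the real bookkeeping lies; the regularization on $(0,\delta]$, although a cosmetic modification of the equation away from the corner, is the ingredient that allows this bookkeeping to be carried out in the $H^2$ class rather than in a less convenient corner-weighted space.
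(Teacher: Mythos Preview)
Your overall fixed-point strategy matches the paper's, but the mechanism differs at the key step. The paper does \emph{not} establish a contraction; it uses Schauder's theorem. After showing that the solution map $A^0 \mapsto A^1$ sends the closed convex set $\mathcal{S}_0(t_0,R) = \{f \in \mathcal{S}(t_0,R): S_m/4 \le f \le 2S_M\}$ into itself, the paper observes that the map is compact (an Aubin--Lions argument, since $\partial_t^2 u + \partial_t Q^{0,\delta}$ is controlled in $L^\infty$) and sequentially continuous in the strong topology, then invokes Schauder directly. Uniqueness is handled separately in a final step via Gronwall. Your contraction route in $L^\infty_t L^2_x$ is a legitimate alternative and would give uniqueness for free, but the caveat you flagged is real: the Lipschitz constant of $\Phi$ depends on the $W^{1,\infty}$ bounds of $v[\bar A]$ and need not be $<1$ on the full range $t_0 \in (0,t^*)$ with $t^*$ calibrated as in Definition~\ref{set3}. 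Schauder sidesteps this completely, requiring only invariance, continuity, and compactness, which is why the paper can state the result for every $t_0 < t^*$ without further shrinking.

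Two smaller points where your sketch deviates from what is actually needed. First, membership in $\mathcal{S}(t_0,R)$ alone does not force a positive lower bound on $\bar A$ (the definition only asks for nonnegativity), so $v[\bar A]$ is not automatically well-defined; the paper obtains the bound $A^1 \ge S_m/4$ a posteriori by comparing with the ODE solution $s_m' = -Q^{0,\delta}$, $s_m(0) = 2S_m/3$, and then restricts the fixed-point argument to the invariant subset $\mathcal{S}_0(t_0,R)$ where this bound holds. Second, the invariance $\mathcal{E}(A^1) \le R$ is not extracted from the characteristic representation but from an energy identity: multiply the transport equation by $u$, its $x$-derivative by $S = \partial_x u$, integrate over $(0,t)\times(0,x)$, and add. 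The constants $\alpha,\beta$ in Definition~\ref{set3} and the factor $1/8$ in \eqref{Radius} are tuned precisely to this computation, which is what makes the mixed $L^\infty_t H^1_x \cap L^\infty_x H^1_t$ bookkeeping close.
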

%%%%%%%%%%%%%%%%%%%%%%%%%%%%%%%%%%%%%%%%%%%%%%%%%%%%%%%%%%%%%%%%%%%%

Note that for a strictly positive $h\in \mathcal{S} (t_0 , R)$, the corresponding velocity field  $v$ solving $\partial_x (h \partial_x v)=0$ with boundary conditions (\ref{Mat4}) reads
\begin{gather}
 v(t,x) = v_{in} (t) + \frac{v_L (t) - v_{in} (t)}{\displaystyle \int^L_0  \frac{d\xi }{h (t, \xi )}} \int^x_0  \frac{d\xi }{h (t, \xi )} , \label{Veloc1} \\
 \p_x v(t,x) =  \frac{v_L (t) - v_{in} (t)}{\displaystyle \int^L_0  \frac{d\xi }{h (t, \xi )}}  \frac{1 }{h (t, x) } .
\label{Veloc2}
\end{gather}

%%%%%%%%%%%%%%%%%%%%%%%%%%%%%%%%%%%%%%%%%%%%%%%%%%%%%%%%%%%%%%%%%%%%
\begin{remark}  Under the compatibility condition
\begin{equation}\label{Compat1}
Q^{0,0} = -\frac{d S_0}{d t} (0) - v_{in} (0) \frac{d S_1}{d x} (0) = \frac{v_L (0) - v_{in} (0)}{\displaystyle \int^L_0 \frac{dx}{S_1 ( x )}} ,
\end{equation}
we could set $\delta =0$. However, imposing (\ref{Compat1}) is not physically justified. Note that this condition is systematically used in \cite{HagenRenardy99} as well as \cite{HagenNA05}.
\end{remark}
%%%%%%%%%%%%%%%%%%%%%%%%%%%%%%%%%%%%%%%%%%%%%%%%%%%%%%%%%%%%%%%%%%%%

The rest of this section is devoted to the proof of Theorem \ref{SmallTimeExist}.
The basic and rather standard idea is to construct a sequence approaching a fixed point of a suitable nonlinear mapping. The proof is carried over by means of several steps.  We fix $t_0\in (0,t^*)$ and  $\delta \in (0, \min{\{1,t_0\}} )$ such that (\ref{DeltaCo}) is satisfied.

\vskip10pt
\fbox{STEP~1}
\vskip10pt
We take an arbitrary $A^0\in \mathcal{S} (t_0 , R)$. Then we substitute $h=A^0$ into (\ref{Veloc1}) and calculate $v^0 =v$. We note that $A^0\in \mathcal{S} (t_0 , R)$ implies
\begin{gather}
v^0 \in W^{1, \infty} ([0, t_0] ; H^1 (0,L))\cap L^{\infty} ([0, t_0] ; H^2 (0,L)),
\label{Itervel1} \\
t\mapsto \p_x v^0 ( t , 0) = \frac{v_L (t) - v_{in} (t)}{\displaystyle \int^L_0  \frac{d\xi }{A^0 (t, \xi )}}  \frac{1 }{S_0 (t) } \in H^1 (0, t_0 ) , \label{Itervel2} \\
  \p_x v^0 (t,x) >0  \quad \mbox{and} \quad v_{in} (t) \leq v^0 (t, x) \leq v_L (t) \; \mbox{ in } \;    [0, t_0 ]\times [0, L].\label{Itervel3}
\end{gather}
Next, we introduce functions $Q^{0}$ and $Q^{0, \delta}$ defined on $[0, t_0]$ by
\begin{gather}
Q^{0} (t) = \frac{v_L (t) - v_{in} (t)}{\displaystyle \int^L_0 \frac{d\xi }{A^0 (t, \xi )}}   \label{Force1} \\
Q^{0, \delta} (t) = \left\{
                      \begin{array}{ll}
                        \displaystyle Q^{0} (t), \quad \hbox{for} \; \delta \leq t\leq t_0 , &\\ \\
                        \displaystyle Q^{0, 0} + ( Q^{0} (\delta ) - Q^{0, 0} ) \frac{t}{\delta}, \quad \hbox{for} \; 0\leq t< \delta .&
                      \end{array}
                    \right.
\label{Force2}
\end{gather}
Obviously, $Q^{0, \delta}  \in W^{1, \infty } (0, t_0 )$, and, using Jensen's inequality  and (\ref{Data1-}), we get
\begin{eqnarray}
0 \le Q^{0} (t) & \leq & \frac{ v_L (t) }{\displaystyle \int^L_0 \frac{dx }{A^0 (t, x )}} \leq \frac{v_L (t)}{L^2}
\int^L_0 A^0 (t, x ) \ dx \nonumber \\
& \leq & \frac{V_M}{L^{3/2}} || A^0 (t, \cdot )||_{L^2 (0,L)} \le \frac{V_M R}{L^{3/2}}, \label{EstDervel}
\end{eqnarray}
since $A^0\in \mathcal{S}(t_0,R)$, as well as
\begin{equation}\label{EstDervela}
\left| Q^{0,\delta}(t) \right| \le |Q^{0,0}|\ \left( 1 - \frac{t}{\delta} \right)_+ + \frac{V_M R}{L^{3/2}}
\end{equation}

Now we are in a position to define the solution operator:

For $A^0 \in  \mathcal{S} (t_0 , R)$ and $v^0$ given by (\ref{Veloc1})  with $A^0$ instead of $h$, we solve the initial-boundary value problem
\begin{gather}
\partial_t u(t,x) = - v^0 (t,x) \partial_x u(t,x) - Q^{0, \delta} (t) , \quad (t,x) \in  Q_{t_0}, \label{Solop1} \\
u(0,x) = S_1 (x), \; x\in (0,L), \qquad u(t,0) = S_0 (t), \; t\in (0,t_0 ). \label{Solop2}
\end{gather}
Because of (\ref{Itervel1})-(\ref{Itervel3}) and regularity and compatibility of the data, we may apply a result of Hagen \& Renardy \cite{HagenRenardy99} (see Theorem \ref{HagenR} in Appendix) to problem (\ref{Solop1})-(\ref{Solop2}) to obtain a unique solution
    $u\in C^1 ([0, t_0 ] ; H^1 (0, L)) \cap C([0, t_0 ]; H^2 (0,L) ).$
We set
\begin{equation}\label{Solop}
    A^1 (t,x) = u (t,x) , \quad (t,x)\in Q_{t_0} .
\end{equation}
Relation (\ref{Solop}) defines a nonlinear operator, assigning to a given $A^0 \in \mathcal{S} (t_0 , R)$ the unique function $A^1$ in the class $C^1 ([0, t_0 ] ; H^1 (0, L)) \cap C([0, t_0 ]; H^2 (0,L) ).$ By (\ref{Solop1}), $\p^2_{t} u  + \p_t Q^{0, \delta} \in  L^\infty(Q_{t_0})$. Using a compactness lemma of Aubin type (see e.g. \cite{SimonJ:87}) we conclude that our nonlinear operator, defined on  $\mathcal{S} (t_0 , R)$, is compact.
\vskip5pt
At this stage we  follow the ideas of \cite{HagenRenardy99} and \cite{HagenNA05} to show that this nonlinear operator has a fixed point for all $t_0 < t^*$. They study the nonisothermal fiber spinning and their system of equations is different. Hence we are obliged to give an independent proof of short time existence, but the result remains close to their considerations.

The natural approach is to apply Schauder's fixed point theorem. To use it we have to prove that $\mathcal{S} (t_0 , R)$ is  a relatively compact convex set invariant under our nonlinear mapping. Finally, we should establish the  continuity of the mapping (\ref{Solop}).

\vskip10pt
\fbox{STEP~2}
\vskip10pt

We establish uniform $L^\infty-$ bounds for the function $u$. We have

%%%%%%%%%%%%%%%%%%%%%%%%%%%%%%%%%%%%%%%%%%%%%%%%%%%%%%%%%%%%%%%%%%%%
\begin{lemma} Let $R$ be given by Definition \ref{set2}  and let $t_0 \leq t^*$.
Then the solution $u$ of problem (\ref{Solop1})-(\ref{Solop2}) satisfies the estimate
\begin{gather}
 \frac{S_m}{4} \leq u(t,x) \leq  S_M + \frac{2 S_m}{3} \le 2S_M  \, \mbox{ in } \, [0,t_0 ]\times [0,L].\label{Posit1}
\end{gather}
\end{lemma}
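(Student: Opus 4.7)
The natural approach is to integrate the transport equation (\ref{Solop1}) along characteristics. Since $v^0 \in L^\infty$ with $\partial_x v^0 \in L^\infty$ (by (\ref{Itervel1})) and $v^0 > 0$ on $Q_{t_0}$ (by (\ref{Itervel3})), for every $(t,x) \in [0,t_0]\times[0,L]$ the backward characteristic $s \mapsto X(s;t,x)$ defined by $\dot X = v^0(s,X)$, $X(t) = x$, is unique, strictly increasing in $s$, and exits the rectangle $[0,t_0]\times[0,L]$ at exactly one of two places: either at some $X(0;t,x) \in [0,L]$ (where the initial datum $S_1$ applies) or at a time $s_0 \in (0,t]$ with $X(s_0;t,x) = 0$ (where the boundary datum $S_0$ applies). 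The positivity of $v^0$ rules out exit through $x = L$.

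Along such a characteristic one computes $\tfrac{d}{ds} u(s, X(s;t,x)) = (\partial_t u + v^0 \partial_x u)(s,X(s;t,x)) = -Q^{0,\delta}(s)$, and integrating from the exit parameter $s_\ast \in \{0,s_0\}$ up to $t$ gives
\begin{equation*}
u(t,x) \;=\; u_\ast \;-\; \int_{s_\ast}^{t} Q^{0,\delta}(\tau)\,d\tau,
\end{equation*}
where $u_\ast$ equals either $S_1(X(0;t,x))$ or $S_0(s_0)$. In either case $S_m \leq u_\ast \leq S_M$ by hypothesis (\ref{Data1-}). Hence the bound on $u$ reduces to a bound on the time integral of $|Q^{0,\delta}|$.

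For this, I would use the pointwise estimate (\ref{EstDervela}): since $(1-\tau/\delta)_+$ vanishes for $\tau \geq \delta$,
\begin{equation*}
\int_0^{t_0} |Q^{0,\delta}(\tau)|\,d\tau \;\leq\; |Q^{0,0}|\,\delta \;+\; \frac{V_M R}{L^{3/2}}\, t_0 .
\end{equation*}
The condition (\ref{DeltaCo}) bounds the first term by $S_m/12$, while the first inequality in (\ref{Small2}), together with $t_0 \leq t^*$, bounds the second term by $S_m/4$. Summing and inserting into the characteristic formula yields
\begin{equation*}
S_m - \tfrac{S_m}{12} - \tfrac{S_m}{4} \;\leq\; u(t,x) \;\leq\; S_M + \tfrac{S_m}{12} + \tfrac{S_m}{4},
\end{equation*}
which is stronger than (and therefore implies) the claimed bounds $S_m/4 \leq u(t,x) \leq S_M + 2S_m/3$.

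There is no real obstacle here beyond some care with the characteristic geometry: the key qualitative inputs, namely strict positivity of $v^0$ (so that no characteristic escapes through $x=L$ and each point is traced back to known data) and the quantitative estimate (\ref{EstDervela}) on $Q^{0,\delta}$, are already in hand from the previous step. The choice of the two smallness thresholds in (\ref{DeltaCo}) and in the first inequality of (\ref{Small2}) is precisely what makes the total perturbation $\int_0^{t^*} |Q^{0,\delta}|\,d\tau$ absorbable into the gap between $S_m$ (respectively $S_M$) and the target bound.
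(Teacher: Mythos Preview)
Your proof is correct and is essentially the same argument as the paper's, only phrased via the explicit characteristic representation rather than via the comparison principle with spatially constant barriers $s_m(t)$, $s_M(t)$ solving $\dot s = -Q^{0,\delta}$. For this linear transport equation the two formulations coincide, and the quantitative inputs (\ref{DeltaCo}), (\ref{Small2}), and (\ref{EstDervela}) enter in exactly the same way.
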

%%%%%%%%%%%%%%%%%%%%%%%%%%%%%%%%%%%%%%%%%%%%%%%%%%%%%%%%%%%%%%%%%%%%

\begin{proof} Let $s_m\in W^{2, \infty} (0, t_0 )$ be the solution to the Cauchy problem
\begin{equation}\label{LowCauchy}
    \frac{d s_m (t)}{d t} = - Q^{0, \delta} (t) \; \mbox{in } \; (0, t_0 ), \, s_m (0) =  \frac{2 S_m}{3} .
\end{equation}
 Then $s_m$ clearly solves (\ref{Solop1}) and it follows from (\ref{Data1-}) and (\ref{Solop2}) that
$$
u(0,x) = S_1(x) \ge S_m \ge s_m(0) , \quad x\in (0,L) ,
$$
while (\ref{Data1-}), (\ref{DeltaCo}), the nonnegativity of $Q^0$, (\ref{Solop2}), and (\ref{LowCauchy}) ensure that
\begin{eqnarray*}
s_m(t) & = & \frac{2 S_m}{3} - \int_0^t Q^{0,\delta}(s) ds \le \frac{2 S_m}{3} - \int_0^{\min{\{\delta , t\}}} Q^{0,\delta}(s) ds \\
& \le & \frac{2 S_m}{3} - \int_0^{\min{\{\delta , t\}}} Q^{0,0} \left( 1 - \frac{s}{\delta} \right) ds \le \frac{2 S_m}{3} + |Q^{0,0}| \frac{\delta}{2} \\
& \le & S_m \le S_0(t) = u(t,0) .
\end{eqnarray*}
The comparison principle then entails that $u(t,x)\ge s_m(t)$ for $(t,x)\in Q_{t_0}$, whence, by (\ref{Small2}), (\ref{DeltaCo}), and (\ref{EstDervela}),
\begin{eqnarray*}
u(t,x) & \ge & s_m(t) \ge \frac{2 S_m}{3} - \int_0^t |Q^{0,\delta}(s)| ds  \\
& \ge & \frac{2 S_m}{3} - |Q^{0,0}| \frac{\delta}{2} - \frac{V_M R}{L^{3/2}}\ t^* \ge \frac{S_m}{4}.
\end{eqnarray*}

 This proves the lower bound. Proving the upper bound is analogous. The comparison function is now given by the solution to the Cauchy problem
\begin{equation}\label{LowCauchy1}
\frac{d s_M (t)}{d t} = - Q^{0, \delta} (t) \; \mbox{in } \; (0, t_0 ), \, s_M (0) = S_M + \frac{S_m}{2} .
\end{equation}
 Indeed, $s_M$ clearly solves (\ref{Solop1}) with $s_M(0)\ge S_M\ge S_1(x)=u(0,x)$ for $x\in (0,L)$ and it follows from (\ref{Data1-}), (\ref{Small2}), (\ref{DeltaCo}), (\ref{EstDervela}), (\ref{Solop2}), and (\ref{LowCauchy1}) that
$$
s_M(t) \ge s_M(0) - |Q^{0,0}| \frac{\delta}{2} - \frac{V_M R}{L^{3/2}}\ t^* \ge S_M \ge u(t,0) .
$$
Applying again the comparison principle gives $u(t,x)\le s_M(t)$ for $(t,x)\in Q_{t_0}$, which completes the proof since (\ref{DeltaCo}) guarantees that
$$
s_M(t) \le S_M + \frac{S_m}{2} + |Q^{0,0}| \frac{\delta}{2} \le S_M + \frac{2 S_m}{3} .
$$
This proves the Lemma.
\end{proof}

\medskip

Now we use equation (\ref{Solop1}) to calculate $G(t) = \p_x u (t, 0) \in C([0, t_0 ])$ getting:
\begin{equation}\label{InitDer}
 G(t) = \p_x u (t, 0)  = -\frac{1}{v_{in} (t)} \left( \frac{d S_0 (t)}{d t} + Q^{0,\delta } (t) \right).
\end{equation}
Due to the assumptions on the data, $G \in W^{1, \infty } (0, t_0 )$ and we have
\begin{equation}\label{EstG}
    | G(t) | \leq \frac{1}{ v_{in} (t) } \left(  | \frac{d S_0 (t)}{d t} | + | Q^{0,\delta } (t) | \right).
\end{equation}

Next, we take the derivative of  equation (\ref{Solop1}) with respect to the $x$ variable. This yields  that $S=\partial_x u$ solves
\begin{gather}
\partial_t S = - v^0 \partial_x S - \p_x v^0 S , \quad (t,x) \in  Q_{t_0}, \label{SolopD1} \\
S (0,x) = \frac{ d S_1 }{d x}(x) , \; x\in (0,L); \qquad S (t,0) = G (t), \; t\in (0,t_0 ), \label{SolopD2}
\end{gather}
 the function $v^0$ being still given by (\ref{Veloc1}) with $A^0$ instead of $h$.

Multiplying equation (\ref{Solop1}) by $u$, equation (\ref{SolopD1}) by $S$, integrating both equations on $(0,t)\times (0,x)$, $(t,x) \in Q_{t_0}$, and adding the obtained identities, we deduce:
\begin{gather}
\frac{1}{2} \int^x_0 \big\{ u^2 + S^2 \big\} (t, \xi ) \ d\xi + \frac{1}{2} \int^t_0  v^0(\tau , x) \big\{ u^2 + S^2 \big\} (\tau , x ) \ d\tau +\notag \\
 \frac{1}{2} \int^t_0  \int^x_0 \p_{x} v^0(\tau , \xi ) \big\{ S^2 -u^2 \big\}  (\tau , \xi ) \ d\xi d\tau +  \int^t_0  \int^x_0 Q^{0, \delta } (\tau ) u (\tau , \xi ) \ d\xi d\tau \notag \\
=\frac{1}{2} \int^x_0 \left\{ S_1^2 + \left( \frac{d S_1 }{dx} \right)^2 \right\} ( \xi ) \ d\xi + \frac{1}{2} \int^t_0  v_{in}(\tau ) \left\{ S_0^2 + G^2 \right\} (\tau ) \ d\tau .
\label{AprioriEst1}
\end{gather}

Now we use (\ref{Data1-}), (\ref{EstDervela}) with (\ref{EstG}) to get
\begin{gather}
\int^t_0  v_{in}(\tau )  G^2  (\tau ) \ d\tau \leq  \int^t_0  \frac{2}{ v_{in}(\tau )} \left\{ \left| \frac{ d S_0 }{d \tau } \right|^2 + | Q^{0, \delta } (\tau ) |^2 \right\} \ d\tau \leq \notag \\
 \int^t_0  \frac{2}{ v_{in}(\tau )} \left| \frac{ d S_0 }{d \tau } \right|^2  \ d\tau +  \frac{4\min \{ t, \delta \} }{v_m} |Q^{0,0} |^2 + \frac{4 V_M^2 R^2}{v_m L^3} t  .\label{LastT}
\end{gather}
The third term on the left hand side of (\ref{AprioriEst1}) is estimated  with the help of (\ref{Itervel3}) and (\ref{Posit1})  as
\begin{gather}
\int^t_0  \int^x_0 \p_{x} v^0(\tau , \xi ) ( u^2 - S^2)   (\tau , \xi ) \ d\xi d\tau \leq  \int^t_0  \int^x_0 \frac{Q^0(\tau)}{A^0 (\tau , \xi)} u^2 (\tau , \xi) \ d\xi d\tau \notag \\
 \leq  4S_M^2 \int^t_0 Q^0(\tau) \int^x_0  \frac{d\xi }{A^0(\tau,\xi)} \ d\tau \le 4S_M^2 \int_0^t \left( v^0(\tau,x)-v_{in}(\tau) \right)\ d\tau  \notag \\
 \le 4S_M^2 V_M t ,\label{PrimoThird}
\end{gather}
while the fourth term  obeys
\begin{gather}
\left| \int^t_0  \int^x_0 Q^{0, \delta } (\tau ) u (\tau , \xi ) \ d\xi d\tau \right| \leq  2S_M L \int_0^t |Q^{0, \delta}(\tau)| \ d\tau   \notag \\
 \le 2S_M L \left( |Q^{0,0}| + \frac{V_M R}{L^{3/2}} \right) t ,\label{ThirdT}
\end{gather}
 thanks to (\ref{EstDervela}) and (\ref{Posit1}).

Now,  setting
$$
y(t) =  \sup_{x\in [0,L]} \left\{ \int^x_0 \big( u^2 + S^2 \big) (t, \xi ) \ d\xi + \int^t_0  v^0(\tau , x) \big\{ u^2 + S^2 \big\} (\tau , x ) \ d\tau \right\} ,
$$
 we may insert (\ref{LastT})-(\ref{ThirdT}) into (\ref{AprioriEst1}) and  use (\ref{Radius}) and (\ref{Small2}) to get the following estimate:
\begin{eqnarray*}
y(t) & \le & \int^L_0 \left\{ S_1^2 + \left( \frac{d S_1 }{d \xi} \right)^2 \right\} ( \xi ) \ d\xi \\
& & +  \int^t_0  v_{in}(\tau ) \left\{ S_0^2 + \frac{2}{ v_{in}^2 (\tau )} \left| \frac{ d S_0 }{d \tau } \right|^2 \right\} (\tau ) \ d\tau  + (\alpha+\beta) t  \\
& \le &  \frac{R^2}{8} + \frac{R^2}{8} \frac{t}{t^*} \le \frac{R^2}{4}  .
\end{eqnarray*}
 Recalling (\ref{Data1-}) and (\ref{Itervel3}), we thus conclude that
\begin{equation}\label{Apriori1}
\mathcal{E}(u) < R .
\end{equation}

Next, we use equation (\ref{Solop1}) and estimate (\ref{Apriori1}) to control $\p_t u$, obtaining
\begin{gather}
\sup_{t\in [0, t_0] }   || \p_t u (t) ||_{L^2 (0,L)} + \sup_{x\in [0, L] } || \p_t u (\cdot , x) ||_{L^2 (0, t_0)}  < \notag \\ R  V_M \big( 2+\frac{1}{L} + \frac{\sqrt{T}}{L^{3/2}} \big) + | Q^{0,0} | (\sqrt{T} + \sqrt{L}).\label{Apriori2}
 \end{gather}
Therefore the image of a nonnegative function from  $\mathcal{S} (t_0 , R)$ remains in $\mathcal{S} (t_0 , R)$ and satisfies the $L^\infty-$ bound (\ref{Posit1}). Therefore our nonlinear map (\ref{Solop}) maps the convex set
\begin{equation}\label{Invariant}
    \mathcal{S}_0 (t_0 , R) = \{  f\in \mathcal{S} (t_0 , R) \, | \, f \ \mbox{satisfies (\ref{Posit1}) a.e.} \}
\end{equation}
into itself.

\vskip10pt
\fbox{STEP~3}
\vskip10pt
 Let $X$ be the intersection of the Banach spaces $ W^{1, \infty} ([0, t_0] ; L^2 (0,L))$, $ L^{\infty} ([0, t_0] ; H^1 (0,L))$,  $W^{1, \infty} (0, L;L^2 (0, t_0))$ and $ L^\infty(0,L;H^1 (0, t_0))$. Clearly, $ \mathcal{S}_0 (t_0 , R) $ is a convex, bounded and closed subset of the Banach space $X$. We apply the Schauder fixed point theorem to prove that the mapping  (\ref{Solop}) admits a fixed point in $ \mathcal{S}_0 (t_0 , R) $. After Step~2, it remains only to prove the sequential continuity of the map (\ref{Solop}). Hence let $\{ A^k \} _{k\in \mathbb{N}}$, be  a  sequence converging in $\mathcal{S}_0 (t_0 , R) $ to $A$. Then we have
 %of $\{ A^k \} _{k\in \mathbb{N}}$, denoted by the same superscript, such that

\begin{gather} A^k \to A \quad \mbox{ strongly in } \; C([0, t_0 ]; L^2 (0,L))\cap L^2 (0, t_0 ; C([0,L])),
\label{ConvgReg1} \\
Q_k^0 =\frac{v_L  - v_{in} }{\displaystyle \int^L_0 \frac{dx }{A^k (t, x )}}  \to  Q^0 =\frac{v_L  - v_{in} }{\displaystyle \int^L_0 \frac{dx }{A (t, x )}} \quad \mbox{ uniformly  in } \;  C([0 , t_0 ]) , \label{ConvgReg3} \\
 v^k = v_{in} + Q_k^0 \int^x_0  \frac{d\xi }{A^k (t, \xi )}   \to v=v_{in}  + Q^0 \int^x_0  \frac{d\xi }{A (t, \xi )} \notag \\ \mbox{ strongly } \, \mbox{ in } \; C([0, t_0] ; H^1 (0,L)) .
\label{ConvgReg2}
\end{gather}
Let $u^k$ be the solution to (\ref{Solop1})-(\ref{Solop2}), corresponding to $A^k$ and $u$ the solution corresponding to $A$. Then by analogous calculations to those performed in Step~2, we get $u^k \to u$ in $C([0, t_0] ; H^1 (0,L)) \cap L^2 (0, t_0 ; C^1 ([0,L]))$. Using the equation (\ref{Solop1}), we find out that one also has $\p_t u^k \to \p_t u$ in $C([0, t_0] ; L^2 (0,L)) \cap L^2 (0, t_0 ; C ([0,L]))$. Therefore the mapping (\ref{Solop}) is continuous and compact and by Schauder's fixed point theorem there is at least one fixed point $A^\delta \in \mathcal{S}_0 (t_0 , R) $.
 \vskip6pt
 Denoting the corresponding velocity field by $v^\delta$, we have
$$
v^\delta \in W^{1, \infty} (0, t_0 ; H^1 (0,L))\cap L^{\infty} (0, t_0 ; H^2 (0,L))
$$
and since $t\mapsto \p_x v^\delta(t,0)\in W^{1, \infty} (0, t_0)$, we may apply Theorem~\ref{HagenR} in Appendix  to conclude that, in fact,
$$
A^\delta \in C^{1} ([0, t_0] ; H^1 (0,L))\cap C ([0, t_0] ; H^2 (0,L)).
$$
\vskip5pt
It remains to prove uniqueness.

\vskip10pt
\fbox{STEP~4}
\vskip10pt

With the obtained smoothness, uniqueness is easy to establish. It suffices to notice that $L^2 (0, t_0 ; H^1 (0,L))$ perturbation of $v^\delta$ is controlled by the $L^2$-perturbation of $A^\delta$ in $x$ and $t$. Then we use this observation,
regularity of $A^\delta$, and Gronwall's lemma to obtain uniqueness.\vskip5pt

The proof of Theorem~\ref{SmallTimeExist} is now complete.

%%%%%%%%%%%%%%%%%%%%%%%%%%%%%%%%%%%%%%%%%%%%%%%%%%%%%%%%%%%%%%%%%%%%
%%%%%%%%%%%%%%%%%%%%%%%%%%%%%%%%%%%%%%%%%%%%%%%%%%%%%%%%%%%%%%%%%%%%
\section{Global existence of regularized  strong solutions}
%%%%%%%%%%%%%%%%%%%%%%%%%%%%%%%%%%%%%%%%%%%%%%%%%%%%%%%%%%%%%%%%%%%%
%%%%%%%%%%%%%%%%%%%%%%%%%%%%%%%%%%%%%%%%%%%%%%%%%%%%%%%%%%%%%%%%%%%%

%%%%%%%%%%%%%%%%%%%%%%%%%%%%%%%%%%%%%%%%%%%%%%%%%%%%%%%%%%%%%%%%%%%%
Now we suppose that the regularity of the solution or/and the strict positivity of $A^\delta$, stated in (\ref{Posit1}), breaks down    at the time $t_p$. Our goal is to prove that  $t_p=T$.
\begin{theorem}\label{GlobalExistDelta}  Under the hypotheses of Theorem~\ref{SmallTimeExist}, the initial-boundary value problem (\ref{MatShortReg1})-(\ref{MatShortReg4}) has a unique solution
$$    A^\delta \in C^1 ([0, T ] ; H^1 (0, L)) \cap C([0, T ]; H^2 (0,L) ) ,$$
  $$ v^\delta \in C^1 ([0, T] ; H^2 (0, L)) \cap C([0, T ]; H^3 (0,L) ).$$
\end{theorem}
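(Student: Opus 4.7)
\textbf{Proof plan for Theorem~\ref{GlobalExistDelta}.} The strategy is a continuation argument. Define $t_p \in (t_0, T]$ as the supremum of times up to which a solution in the regularity class of Theorem~\ref{SmallTimeExist} exists and satisfies $A^\delta > 0$, and argue by contradiction assuming $t_p < T$. The goal is to show that the local lifespan granted by the short-time theorem, when restarted from any time $t_1 \in (\delta, t_p)$, is bounded below by a positive constant $t^\sharp$ depending only on the data, not on $t_1$. Choosing $t_1$ with $t_p - t_1 < t^\sharp/2$ would then produce a solution on $[t_1, t_1 + t^\sharp]$ which, by the local uniqueness established in Step~4, glues to $A^\delta$ and extends it past $t_p$, contradicting maximality.

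The heart of the proof is to derive uniform \emph{a priori} bounds on $(A^\delta, v^\delta)$ on $[\delta, t_p)$, independent of $t_p$. Once $t \ge \delta$ the regularization is inactive and the pair $(A^\delta, v^\delta)$ satisfies (\ref{Mat1})--(\ref{Mat2}), so the formal computations of Section~2.1 become rigorous in our regularity class. They give $\chi^\delta(t) = A^\delta \partial_x v^\delta$ independent of $x$, with $0 < \chi^\delta(t) \le S_M' (V_M - v_m)/L$ as in (\ref{pom5}), the constant $S_M'$ absorbing the bound $A^\delta \le 2 S_M$ inherited from (\ref{Posit1}) and propagated by the sign of $-\chi^\delta$ in the transport equation. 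The crucial estimate is the lower bound: since $\partial_x \log(A^\delta)$ solves the pure transport equation (\ref{pom9}), its $L^\infty$-norm is controlled by its initial and inflow values, which, via the boundary-trace formula preceding (\ref{pom10}) together with (\ref{pom5}) and the bound $v^\delta(\cdot, 0) = v_{in} \ge v_m$, are controlled solely in terms of $v_m, V_M, S_m, S_M$ and the first derivatives of $S_0$ and $S_1$. This yields $A^\delta(t,x) \ge A_m > 0$ on $[\delta, t_p)$.

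With $A_m \le A^\delta \le 2 S_M$ in hand, the elliptic representation (\ref{Veloc1})--(\ref{Veloc2}) provides uniform $L^\infty$-control of $v^\delta, \partial_x v^\delta, \partial_x^2 v^\delta$ on $[\delta, t_p) \times (0, L)$. Differentiating (\ref{Mat1}) in $x$ and applying energy estimates against the transport structure (precisely the computation that underlies (\ref{pom6})--(\ref{pom7})) propagates uniform $H^2(0, L)$ bounds on $A^\delta(t, \cdot)$, and the equations themselves recover the matching bounds on time derivatives. At any $t_1 \in (\delta, t_p)$, the trace $A^\delta(t_1, \cdot)$ together with the shifted boundary data $S_0(t_1 + \cdot)$, $v_{in}(t_1 + \cdot)$, $v_L(t_1 + \cdot)$ thus satisfies the hypotheses of Theorem~\ref{SmallTimeExist} with constants bounded uniformly in $t_1$; moreover, because $A^\delta(t_1, \cdot)$ is a smooth strictly positive section of an already constructed solution, the compatibility relation (\ref{Compat1}) holds automatically at the new initial time, so the Remark applies and we may restart with $\delta = 0$. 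The associated lifespan $t^\sharp$ is then controlled from below by a constant depending only on the uniform bounds of the previous paragraph, which closes the continuation loop; uniqueness on $[0, T]$ is obtained by patching together the local uniqueness of Step~4.

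The main obstacle is clearly the lower bound $A^\delta \ge A_m$: it must be obtained \emph{before} the $H^2$-estimates, because the energy argument for those estimates degenerates as $A^\delta \to 0$, and it is the only place in the proof where the non-trivial structure of the equations enters. Everything hinges on the fact that $\chi^\delta$ is a function of $t$ alone, so that $\partial_x \log A^\delta$ is genuinely transported; the verification that the boundary-trace manipulation of Section~2.1 holds rigorously at the regularity we enjoy, and that the characteristics of $v^\delta$ foliate $[\delta, t_p) \times [0, L]$ cleanly (so that the inflow value at $x = 0$ suffices to control $\partial_x \log A^\delta$ throughout), is the computational core of the argument.
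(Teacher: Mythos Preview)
Your proposal is correct and rests on the same central mechanism as the paper: the lower bound on $A^\delta$ comes from the fact that $\partial_x \log A^\delta$ is transported along characteristics, so it is controlled by initial and inflow data, which in turn are controlled via the trace formula for $\partial_x A^\delta(t,0)$ together with the bound~(\ref{pom5}) on $\chi^\delta$. Once $A^\delta$ is trapped between two positive constants, the higher regularity follows from the linear theory in the Appendix, and continuation is routine.

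The paper organizes the argument somewhat differently, and the differences are worth noting. First, rather than splitting at $t=\delta$ and invoking the short-time bound~(\ref{Posit1}) to control $\partial_x\log A^\delta(\delta,\cdot)$, the paper works on all of $[0,t_p)$ at once: on $[0,\delta]$ the function $y=\partial_x\log A^\delta$ satisfies not the pure transport equation~(\ref{pom9}) but the perturbed equation $\partial_t y + v^\delta\partial_x y + y\,(Q^\delta-Q^{cut,\delta})/A^\delta=0$, and explicit barrier ODEs absorb the extra factor. This yields bounds that are manifestly independent of $\delta$, which is what the next theorem (passing $\delta\to 0$) requires; your version, which feeds the value at $t=\delta$ in from the short-time theorem, works for fixed $\delta$ but does not by itself give the $\delta$-uniformity (your claim that the control is ``solely in terms of $v_m,V_M,S_m,S_M$ and the first derivatives of $S_0,S_1$'' glosses over this). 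Second, the paper does not restart: it simply observes that the $C^1_tH^1_x\cap C_tH^2_x$ norm and the positivity of $A^\delta$ cannot break down before $T$, and appeals directly to Theorem~\ref{HagenR} to keep the solution in the stated class. Your restart-with-$\delta=0$ argument is fine in spirit, but it leans on the unproven Remark after~(\ref{Compat1}); the paper's route sidesteps this and is shorter.
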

%%%%%%%%%%%%%%%%%%%%%%%%%%%%%%%%%%%%%%%%%%%%%%%%%%%%%%%%%%%%%%%%%%%%

The remaining part of this section is devoted to the proof of Theorem~\ref{GlobalExistDelta}.
\vskip10pt
\fbox{STEP~1}
\vskip10pt

First we recall that,  owing to (\ref{MatShortReg2}) and (\ref{MatShortReg4}),  $v^\delta$ is given by
\begin{equation}
v^\delta (t,x) = v_{in} (t) + \frac{v_L (t) - v_{in} (t)}{\displaystyle \int^L_0 \frac{d\xi }{A^\delta (t, \xi )}}
\int^x_0  \frac{d\xi }{A^\delta (t, \xi )}. \label{VelocDelta1}
\end{equation}
 Next, by (\ref{Data1-})
\begin{equation}
 Q^{\delta} (t) = \frac{v_L (t) - v_{in} (t)}{\displaystyle \int^L_0 \frac{dx }{A^\delta (t, x )}}  >0 \quad \mbox{ in } \; \overline{Q_{t_0}} . \label{ForceDelta1}
\end{equation}
 and we put
\begin{equation}\label{ForceDelta2}
    Q^{cut, \delta} (t) = \left\{
                      \begin{array}{ll}
                        \displaystyle Q^{\delta} (t), \quad \hbox{for} \; \delta \leq t\leq t_0 , &\\ \\
                        \displaystyle Q^{0, 0} + ( Q^{\delta} (\delta ) - Q^{0, 0} ) \frac{t}{\delta}, \quad \hbox{for} \; 0\leq t< \delta .&
                      \end{array}
                    \right.
\end{equation}

\vskip10pt
We point out that the upper bound in (\ref{Posit1}) is independent of $t^*$. Indeed, we prove now that it is valid regardless the length of the time interval.  To this end,  we introduce the solution $S_M^\delta \in W^{2, \infty} (0, t_0 )$  to the Cauchy problem
\begin{equation}
\frac{d S_M^\delta (t)}{d t} = |Q^{0, 0}|  \chi_{ [0,\delta]}(t) , \quad t\in (0, t_0 ) , \quad S_M^\delta(0)=S_M .\label{UpCauchy1}
\end{equation}
Owing to the positivity (\ref{ForceDelta1}) of $Q^\delta$, $S_M^\delta$ is a supersolution to (\ref{MatShortReg1})-(\ref{MatShortReg1a}) with $S_M^\delta(0)=S_M\ge S_1(x)=A^\delta(0,x)$ for $x\in [0,L]$ and $S_M^\delta(t)\ge S_M^\delta(0) \ge S_0(t) =A^\delta(t,0)$ for $t\in (0,t_0)$. The comparison principle then implies
\begin{equation}\label{LowDeltabd2}
    A^\delta (t,x) \leq S_M^\delta (t) \leq S_M + \delta | Q^{0,0} |.
\end{equation}
 This proves the upper bound and, in addition, the estimate is independent of the length of the time interval.
\vskip5pt
Therefore, by Jensen's inequality,  we may infer that
\begin{gather}
0< Q^{\delta} (t) \leq \frac{ V_M }{\displaystyle \int^L_0 \frac{d\xi }{A^\delta (t, \xi )}} \leq \frac{V_M}{L^2}
\int^L_0 A^\delta (t, \xi ) \ d\xi \notag \\
\leq \frac{V_M}{L} ( S_M + \delta | Q^{0,0} |) , \label{EstDervelDelta} \\
| G^\delta (t) | = | \p_x A^\delta  (t, 0) |  = \left| -\frac{1}{v_{in} (t)} \left( \frac{d S_0 (t)}{d t} + Q^{cut ,\delta } (t)\right) \right| \notag \\
\le \frac{1}{v_m} \left(  \left| \frac{d S_0 (t)}{d t} \right| + |Q^{0,0} | + \frac{V_M}{L} ( S_M + \delta | Q^{0,0} |) \right),\label{EstGDelta}
\end{gather}
for any $t < t_p$, where $t_p$ is the critical time at which $A^\delta (t,x)$ attains zero for some $x$.%\footnote{this time should probably be introduced earlier to define a ``principle'' of continuation of a %solution}

\vskip10pt
\fbox{STEP~2}
\vskip10pt

Having established that $\p_x A^\delta  (t, 0)$ is bounded in $L^\infty (0,t)$ independently of $t$, we now look for a strictly positive lower bound for $ A^\delta $.

%%%%%%%%%%%%%%%%%%%%%%%%%%%%%%%%%%%%%%%%%%%%%%%%%%%%%%%%%%%%%%%%%%%%
\begin{proposition}\label{logbound}
There are constants $C_1$ and $C_2$, independent of the length of the time interval and of $\delta$, such that we have
\begin{equation}\label{DerlogBd}
    C_1 \leq \p_x  \log A^\delta  (t, x) \leq C_2 \qquad \mbox{ on } \quad \overline{Q_{t_0}} .
\end{equation}
\end{proposition}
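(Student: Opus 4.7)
The plan is to identify a transport equation for $W := \partial_x \log A^\delta$ and exploit the fact that it degenerates to \emph{pure} transport on $[\delta, t_p)$.

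Starting from (\ref{MatShortReg1})--(\ref{MatShortReg1a}), which can be written uniformly as
\begin{equation*}
    \partial_t A^\delta + v^\delta \partial_x A^\delta = -Q^{cut,\delta}(t) \quad \text{in } (0, t_p)\times (0,L),
\end{equation*}
and dividing by $A^\delta > 0$, the function $\log A^\delta$ satisfies $\partial_t \log A^\delta + v^\delta \partial_x \log A^\delta = -Q^{cut,\delta}/A^\delta$. Differentiating once in $x$ and using the key algebraic identity $A^\delta \partial_x v^\delta = Q^\delta(t)$ coming from (\ref{MatShortReg2}), one obtains
\begin{equation*}
    \partial_t W + v^\delta \partial_x W = \frac{Q^{cut,\delta} - Q^\delta}{A^\delta}\, W.
\end{equation*}
The central observation, built into the definition (\ref{ForceDelta2}), is that $Q^{cut,\delta} \equiv Q^\delta$ on $[\delta, t_p)$, so $W$ satisfies the \emph{pure} transport equation $\partial_t W + v^\delta \partial_x W = 0$ on that range, with transport speed $v^\delta > 0$ by (\ref{VelocDelta1})--(\ref{ForceDelta1}).

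I would then apply the method of characteristics. Since $v^\delta$ is strictly positive, the backward characteristic through any point $(t, x) \in [0, t_p)\times [0,L]$ either exits at $t = 0$ with some $x_0 \in [0,L]$ or crosses $x = 0$ at some time $\tau \in [0, t)$. Along it, $W$ solves a linear ODE whose coefficient $(Q^{cut,\delta} - Q^\delta)/A^\delta$ vanishes for $s \ge \delta$ and is bounded by a constant $K$ on $(0,\delta)$ --- here I would use the short-time lower bound $A^\delta \ge S_m/4$ from (\ref{Posit1}), which applies throughout $[0,\delta] \subset [0,t^*]$, together with the uniform bounds (\ref{EstDervelDelta}) on $Q^\delta$ and the analogous bound on $Q^{cut,\delta}$ coming from (\ref{ForceDelta2}). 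Gronwall's inequality along the characteristic then yields
\begin{equation*}
    |W(t,x)| \le e^{K\delta}\, \max\bigl\{ |W(0, x_0)|,\, |W(\tau, 0)|\bigr\}.
\end{equation*}
The boundary/initial values are uniformly controlled: $|W(0,x_0)| = |(dS_1/dx)(x_0)|/S_1(x_0)$ is bounded via (\ref{Data1}) and $S_1 \ge S_m$, while $|W(\tau, 0)| = |G^\delta(\tau)|/S_0(\tau)$ is bounded by (\ref{EstGDelta}) together with $S_0 \ge S_m$. Setting $C_2$ to be $e^K$ times the maximum of these bounds, and $C_1 = -C_2$, yields the assertion.

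The main obstacle, and the reason the proposition is stated with constants \emph{independent of the length of the time interval and of} $\delta$, is to make sure every estimate entering the exponent $K\delta$ and the boundary bounds for $W$ is itself uniform: this is precisely what (\ref{LowDeltabd2}), (\ref{EstDervelDelta}) and (\ref{EstGDelta}) were prepared for, combined with $\delta < 1$ so that $e^{K\delta} \le e^K$. No a priori lower bound on $A^\delta$ on the long range $[\delta, t_p)$ is needed, since the transport equation there is source-free -- the whole point of the argument is that positivity and boundedness of $W$ propagate by pure transport once the short initial slab has been absorbed.
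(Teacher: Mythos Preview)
Your argument is correct and follows essentially the same approach as the paper: both derive the transport equation for $\partial_x\log A^\delta$, observe that the zeroth-order coefficient $(Q^{cut,\delta}-Q^\delta)/A^\delta$ vanishes for $t\ge\delta$ and is uniformly bounded on $[0,\delta]$ thanks to the short-time lower bound (\ref{Posit1}), and control the inflow data via (\ref{EstGDelta}). The only cosmetic difference is that the paper packages the estimate via comparison with barrier ODE solutions, whereas you integrate Gronwall directly along characteristics; the two are equivalent for this linear problem.
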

%%%%%%%%%%%%%%%%%%%%%%%%%%%%%%%%%%%%%%%%%%%%%%%%%%%%%%%%%%%%%%%%%%%%

\begin{proof}
We notice that $y = \p_x \log A^\delta$ satisfies the equation
\begin{equation}\label{LogEq}
\p_t y + v^\delta  \p_x y + \frac{y}{A^\delta}  ( Q^{\delta} - Q^{cut , \delta} )=0  \; \mbox{ in }\; Q_{t_p},
\end{equation}
and
\begin{equation}\label{condDelta}
    y(0,x) = \frac{d \log S_1}{d x}(x) , \quad x\in (0,L) \, , \qquad y(t,0) = \frac{G^\delta (t)}{S_0 (t)} , \quad  t\in (0,t_p).
\end{equation}
 The function $F(t,x) = ( Q^{\delta} - Q^{cut, \delta})(t)/A^\delta (t,x)$ vanishes for $t\ge \delta$ and, due to (\ref{Posit1}), satisfies the following bound
\begin{eqnarray}
\int_0^t || F(\tau) ||_{L^\infty (0,L)} \ d\tau & = & \int_0^{\min{\{t,\delta\}}} || F(\tau) ||_{L^\infty (0,L)} \ d\tau \nonumber \\
& \leq & \frac{8\delta }{S_m} \left( | Q^{0,0} | + || Q^\delta ||_{L^\infty (0, \delta)} \right) \label{BdF}
\end{eqnarray}
 for every $t$ since $\delta \leq t_0$.

 We next introduce the solution $y_m$ to the ordinary differential equation
\begin{gather}
\frac{d y_m }{ d t}(t) = || F (t) ||_{L^\infty (0,L)}\ y_m(t) \quad \mbox{ in } \; (0,T), \label{Barrier1}
\end{gather}
with initial condition
\begin{equation}\label{pim}
y_m(0) = \min{ \left\{ \min_{[0,L]}{\left\{ \frac{d\log{S_1}}{dx} \right\}} , - G_M\right\} } \le 0 ,
\end{equation}
with
\begin{equation}\label{poum}
G_M = \frac{1}{v_m} \left[ \left\| \frac{dS_0}{dt} \right\|_{L^\infty(0,T)} + |Q^{0,0}| + \frac{V_M}{L} ( S_M + \delta | Q^{0,0} |) \right] .
\end{equation}
Owing to (\ref{EstGDelta}), (\ref{condDelta}), (\ref{Barrier1}), and (\ref{pim}), $y_m$ is nonpositive and thus a subsolution to (\ref{LogEq}), and satisfies $y_m(t)\le y_m(0) \le y(t,0)$ for $t\in (0,t_p)$ and $y_m(0)\le y(0,x)$ for $x\in (0,L)$. The comparison principle then entails that
$$
y(t,x) \ge y_m (t) = y_m (0) \exp{ \int^t_0 || F(\tau ) ||_{L^\infty (0,L) } \ d\tau } .
$$
Since $y_m(0)\le 0$, we deduce from (\ref{BdF}) that $y(t,x) \ge C_1$ for some positive constant $C_1$, independent of $\delta$ and $t_0$.

 Similarly, let $Y_M$ be the solution to the ordinary differential equation

\begin{gather}
\frac{d Y_M }{ d t}(t) = || F (t) ||_{L^\infty (0,L)} \ Y_M(t) \quad \mbox{ in } \; (0,T), \label{Barrier1c}
\end{gather}
with initial condition
\begin{equation}\label{pam}
Y_M(0) = \max{ \left\{ \max_{[0,L]}{\left\{ \frac{d\log{S_1}}{dx} \right\}} , G_M\right\} } \ge 0 ,
\end{equation}
the constant $G_M$ being defined in (\ref{poum}). It follows from (\ref{EstGDelta}), (\ref{condDelta}), (\ref{Barrier1c}), and (\ref{pam}) that $Y_M$ is a supersolution to (\ref{LogEq}) which satisfies $Y_M(t)\ge Y_M(0) \ge y(t,0)$ for $t\in (0,t_p)$ and $Y_M(0)\ge y(0,x)$ for $x\in (0,L)$. Using once more the comparison principle and (\ref{BdF}), we conclude that
$$
y(t,x) \le Y_M (t) = Y_M(0) \exp{ \int^t_0 || F(\tau ) ||_{L^\infty (0,L) } \ d\tau } \le C_2 ,
$$
the constant $C_2$ being independent on $\delta$ and $t$.
\end{proof}

\medskip

Since  $\log A^\delta  (t, 0) = \log S_0 (t)  \ge \log{S_m} > -\infty$ by (\ref{Data1-}) and
$$ \log A^\delta  (t, x) = \log S_0 (t) + \int^x_0 \p_\xi \log A^\delta  (t, \xi ) \ d\xi, $$
we infer from (\ref{DerlogBd}) that
\begin{equation}\label{LinftyLogbd}
   ||  \log A^\delta  ||_{L^\infty (0, t_0 ; W^{1, \infty } (0,L)) } \leq C .
\end{equation}
Therefore $A^\delta$ is strictly positive on $\overline{Q_{t}}$ for all  $t\leq t_p$, and bounded from below and from above by a constant which is independent of both $t$ and $\delta$.

 \vskip10pt
\fbox{STEP~3}
\vskip10pt

Having established (\ref{LinftyLogbd}) we easily obtain the following estimates:
\begin{gather}
|| \p_t A^\delta ||_{L^\infty (Q_{t})} + || \p_x A^\delta ||_{L^\infty (Q_{t})} \leq C,
\label{DerivDeltaA} \\
    ||  v^\delta ||_{L^\infty (Q_{t})} + || \p_x v^\delta ||_{L^\infty (Q_{t})} + || \p_{x}^2 v^\delta ||_{L^\infty (Q_{t})} + || \p^2_{xt} v^\delta ||_{L^\infty (Q_{t})} \leq C,\label{DerivDeltav}
\end{gather}
where $C$ is again independent of $t$ and $\delta$.\vskip5pt
The estimate (\ref{DerivDeltav}) guarantees that the coefficients in equations (\ref{MatShortReg1})-(\ref{MatShortReg1a}) remain regular, whence Theorem~\ref{HagenR} is applicable. Consequently,  $A^\delta$ remains bounded in $C^1 ([0, t]; H^1 (0,L))\cap C ([0, t]; H^2 (0,L))$.\vskip5pt
We conclude that,  for all $t$, $A^\delta$ is bounded from below by a positive constant, independent of $t$, and the norm of $A^\delta$ in $C^1 ([0, t]; H^1 (0,L))\cap C ([0, t]; H^2 (0,L))$ remains bounded by a constant, also independent of $t$, that may, however, depend on $\delta$. The maximal solution therefore  extends to $[0,T]$, and, in fact, we have established the existence of a unique strictly positive solution $A^\delta$ on $(0,T) \times (0,L)$. The corresponding velocity $v^\delta$ is given by (\ref{Veloc1}), with $h=A^\delta$.

\vskip4pt This completes the proof of Theorem~\ref{GlobalExistDelta} .

%%%%%%%%%%%%%%%%%%%%%%%%%%%%%%%%%%%%%%%%%%%%%%%%%%%%%%%%%%%%%%%%%%%%
%%%%%%%%%%%%%%%%%%%%%%%%%%%%%%%%%%%%%%%%%%%%%%%%%%%%%%%%%%%%%%%%%%%%
\section{Existence of a unique strong solution}
%%%%%%%%%%%%%%%%%%%%%%%%%%%%%%%%%%%%%%%%%%%%%%%%%%%%%%%%%%%%%%%%%%%%
%%%%%%%%%%%%%%%%%%%%%%%%%%%%%%%%%%%%%%%%%%%%%%%%%%%%%%%%%%%%%%%%%%%%

At this stage, we are ready to establish the main result of the paper.

%%%%%%%%%%%%%%%%%%%%%%%%%%%%%%%%%%%%%%%%%%%%%%%%%%%%%%%%%%%%%%%%%%%%
\begin{theorem}\label{GlobalExist}  Under the hypotheses (\ref{Data1-}), (\ref{Data1}), the initial-boundary value problem (\ref{Mat1})-(\ref{Mat4}) possesses a unique (strong) solution $A$, $v$ on ${\bar Q}_T$, belonging to the class
$$A, \ \partial_t A, \ \p_x A \in L^\infty((0,L) \times (0,T)) ,$$
$$ v, \ \partial_t v , \ \partial^2_{t,x} v , \ \partial^2_{x,x} v \in L^\infty((0,L) \times (0,T)). $$
\end{theorem}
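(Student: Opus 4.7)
The strategy is to pass to the limit $\delta \to 0^+$ in the family $(A^\delta,v^\delta)$ of regularized global solutions supplied by Theorem~\ref{GlobalExistDelta}, exploiting the crucial fact that the bounds proved there are $\delta$-independent. Uniqueness is then handled separately by an $L^\infty$ estimate along characteristics for the transport equation satisfied by the difference of two solutions.

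For existence, I would first assemble the $\delta$-uniform bounds: the upper bound (\ref{LowDeltabd2}) and the logarithmic estimate (\ref{LinftyLogbd}) give $0<c_0 \le A^\delta \le C$; then (\ref{DerivDeltaA}) provides a uniform $L^\infty(Q_T)$ bound on $\partial_t A^\delta$ and $\partial_x A^\delta$, while (\ref{DerivDeltav}) bounds $v^\delta$, $\partial_x v^\delta$, $\partial_x^2 v^\delta$ and $\partial^2_{tx} v^\delta$ uniformly in $L^\infty(Q_T)$. By Arzel\`a--Ascoli applied to the uniformly Lipschitz family $\{A^\delta\}$, a subsequence $A^{\delta_k}$ converges uniformly on $\overline{Q_T}$ to some $A$ with $A \ge c_0$, and its partial derivatives converge weakly-$\ast$ in $L^\infty(Q_T)$. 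Uniform convergence of $1/A^{\delta_k}$ to $1/A$ then allows passage to the limit in the non-local velocity formula (\ref{VelocDelta1}), producing a uniform limit $v^{\delta_k}\to v$ on $\overline{Q_T}$ together with the corresponding weak-$\ast$ convergence of the relevant derivatives of $v^{\delta_k}$.

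To identify the limit as a solution, I note that (\ref{MatShortReg2}) is literally (\ref{Mat2}) for every $\delta$, so $v$ solves (\ref{Mat2}); for any $\eta>0$ and $\delta_k < \eta$, (\ref{MatShortReg1}) coincides with (\ref{Mat1}) on $(\eta,T)\times(0,L)$, and since $\eta$ is arbitrary, the limit $(A,v)$ satisfies (\ref{Mat1}) a.e.\ in $Q_T$. The boundary and initial conditions (\ref{Mat3})--(\ref{Mat4}) persist in the limit because they hold pointwise for every $\delta_k$ and the convergence is uniform up to the boundary. The regularity claimed for $(A,v)$ in the theorem statement follows from weak-$\ast$ lower semicontinuity of the $L^\infty$-norms.

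For uniqueness, let $(A_1,v_1)$ and $(A_2,v_2)$ be two strong solutions with common data and set $D = A_1 - A_2$, $W = v_1 - v_2$. Both $A_i$ are bounded below by a positive constant, so decomposing the difference of the two instances of (\ref{VelocDelta1}) and using $1/A_1 - 1/A_2 = D/(A_1 A_2)$ yields the pointwise bound $|W(t,x)| + |\partial_x W(t,x)| \le C \sup_{y\in[0,L]} |D(t,y)|$. The difference $D$ satisfies the transport equation $\partial_t D + v_1\,\partial_x D = -(\partial_x v_1) D - A_2\,\partial_x W - (\partial_x A_2) W$, with $D(0,\cdot)\equiv 0$ and $D(\cdot,0)\equiv 0$. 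Since $v_1>0$ on $\overline{Q_T}$, every backward characteristic of $v_1$ reaches either $\{t=0\}$ or $\{x=0\}$, on both of which $D$ vanishes; integrating the ODE for $D$ along such a characteristic, taking a supremum over characteristics arriving at time $t$, and applying Gr\"onwall's lemma to $\phi(t) = \sup_{0\le s\le t,\ y\in[0,L]} |D(s,y)|$ forces $D \equiv 0$, hence $W \equiv 0$. The main obstacle will be this non-local Lipschitz control of $W$ and $\partial_x W$ in terms of $D$, which requires expanding the difference of the two ratios in (\ref{VelocDelta1}); however, this reduces to elementary algebra once the uniform lower bound on each $A_i$ is secured.
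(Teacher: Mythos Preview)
Your existence argument is essentially identical to the paper's: both collect the $\delta$-independent bounds from Theorem~\ref{GlobalExistDelta}, extract a subsequence by compactness (you invoke Arzel\`a--Ascoli, the paper invokes Alaoglu's weak-$\ast$ theorem, but with the uniform Lipschitz bound these amount to the same thing), and pass to the limit in the explicit velocity formula~(\ref{VelocDelta1}). Your remark that (\ref{MatShortReg1}) coincides with (\ref{Mat1}) on $(\eta,T)\times(0,L)$ for $\delta<\eta$ is a useful clarification that the paper leaves implicit.

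For uniqueness you take a slightly different route. The paper simply declares uniqueness ``straightforward'' and, in a subsequent remark, indicates an $L^2$ Gr\"onwall estimate (perturbations of the data in $L^2$ control perturbations of the solution in $L^2$). You instead run an $L^\infty$ argument along characteristics of $v_1$: bounding $|W|+|\partial_x W|\le C\sup_y|D(t,y)|$ via the algebraic identity $1/A_1-1/A_2=D/(A_1A_2)$ in the non-local velocity formula, integrating the transport equation for $D$ along backward characteristics (which hit $\{t=0\}$ or $\{x=0\}$ since $v_1\ge v_m>0$), and closing with Gr\"onwall on $\phi(t)=\sup_{s\le t,y}|D(s,y)|$. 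Both approaches are valid; yours is arguably more natural for a first-order transport equation and avoids any integration by parts, while the $L^2$ approach has the advantage of directly quantifying continuous dependence on the boundary data in an energy norm.
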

%%%%%%%%%%%%%%%%%%%%%%%%%%%%%%%%%%%%%%%%%%%%%%%%%%%%%%%%%%%%%%%%%%%%

\begin{proof}
We just recall the estimates obtained in the proof of Theorem~\ref{GlobalExistDelta}, valid independently of $\delta$:
\begin{gather}
0< C_1 \leq A^\delta (t,x) \leq C\qquad \mbox{ in  } \quad \overline{Q_T},
\label{PointA} \\
    || \p_x A^\delta ||_{L^\infty (Q_{T})} + || \p_t A^\delta ||_{L^\infty (Q_{T})} \leq C.
\label{DerivA}
\end{gather}
Since $L^\infty (Q_T) $ is the dual space of the separable Banach space $L^1 (Q_T)$, Alaoglu's weak$^*$ compactness theorem gives weak$^*$ sequential compactness.  Therefore there exist $A$ and $v$ such that
\begin{gather}
A^\delta \to A \qquad \mbox{uniformly in } \quad \overline{Q_T} , \; \mbox{ as } \; \delta \to 0,
\label{ConvUnif} \\
\p_x A^\delta \rightharpoonup \p_x A \qquad \mbox{weakly}^* \, \mbox{ in } \quad L^\infty ({Q}_T ), \; \mbox{ as } \; \delta \to 0,
\label{WeakDerx} \\
   \p_t A^\delta \rightharpoonup \p_t A \qquad \mbox{weakly}^* \, \mbox{ in } \quad L^\infty ({Q}_T ), \; \mbox{ as } \; \delta \to 0,
\label{WeakDert} \\
v^\delta \to v = v_{in} (t) + \frac{v_L (t) - v_{in} (t)}{\displaystyle \int^L_0 \frac{d\xi }{A (t, \xi )}}
\int^x_0  \frac{d\xi }{A (t, \xi )}\, \mbox{uniformly in } \, \overline{Q_T} , \, \mbox{ as } \ \delta \to 0.
\label{ConvUnifVel} \\
\p_x v^\delta \to \p_x v =  \frac{v_L (t) - v_{in} (t)}{\int^L_0 \displaystyle \frac{d\xi }{A (t, \xi )}}
 \frac{1}{A (t, x )}\, \mbox{uniformly in } \, \overline{Q_T} , \, \mbox{ as } \ \delta \to 0,
\label{ConvUnifVelDer}
\end{gather}
at least for suitable subsequences. Obviously, $(A, v)$ solves the system (\ref{Mat1})-(\ref{Mat4}). Moreover, by virtue of the interior regularity, the equations (\ref{Mat1})-(\ref{Mat2}) are satisfied pointwise.  Finally, according to  the smoothness of $A$ and $v$, the proof of uniqueness is straightforward.
\end{proof}

\medskip

%%%%%%%%%%%%%%%%%%%%%%%%%%%%%%%%%%%%%%%%%%%%%%%%%%%%%%%%%%%%%%%%%%%%
\begin{remark}
The standard way of proving uniqueness relies on Gronwall's inequality. For any bounded time interval, small $L^2$-perturbations of the data $v_L$, $v_{in}$, $S_0$ in the $L^2-norm$ result in the corresponding variation of the solution in the same norm, that may depend exponentially on the length of the time interval. Better estimates would require refined analytical arguments.
\end{remark}
%%%%%%%%%%%%%%%%%%%%%%%%%%%%%%%%%%%%%%%%%%%%%%%%%%%%%%%%%%%%%%%%%%%%

%%%%%%%%%%%%%%%%%%%%%%%%%%%%%%%%%%%%%%%%%%%%%%%%%%%%%%%%%%%%%%%%%%%%
%%%%%%%%%%%%%%%%%%%%%%%%%%%%%%%%%%%%%%%%%%%%%%%%%%%%%%%%%%%%%%%%%%%%
\section{Appendix}
%%%%%%%%%%%%%%%%%%%%%%%%%%%%%%%%%%%%%%%%%%%%%%%%%%%%%%%%%%%%%%%%%%%%
%%%%%%%%%%%%%%%%%%%%%%%%%%%%%%%%%%%%%%%%%%%%%%%%%%%%%%%%%%%%%%%%%%%%

Here we recall the result from \cite{HagenRenardy99}, which is used in this paper:

%%%%%%%%%%%%%%%%%%%%%%%%%%%%%%%%%%%%%%%%%%%%%%%%%%%%%%%%%%%%%%%%%%%%
\begin{theorem}\label{HagenR}
Let $f$ and $p$ be given continuous functions defined on $[0, t_0 ]\times [0,b]$, $b>0$. Let $u^0 \in C([0,b])$ and $u^b \in C([0, t_0 ])$ and let us suppose\footnote{Hagen and Renardy suppose $\p_x p (\cdot, b), \p_x f (\cdot , b) \in   H^{1} (0, t_0 )$ as well. This does not seem to be necessary.} that
\begin{gather}
 p, f \in W^{1, \infty} ([0, t_0] ; H^1 (0,b))\cap L^{\infty} ([0, t_0] ; H^2 (0,b)), \notag \\
 \mbox{ are such that} \quad  \p_x p (\cdot , 0) \; \mbox{and} \;  \p_x f (\cdot , 0)\in   H^{1} (0, t_0 );  \label{Hyp1} \\
    p< 0 \quad \mbox{ on } \quad [0, t_0 ]\times [0,b] ;  \label{Hyp2} \\
u^0 \in H^2 (0,b), \quad u^b \in H^2 (0, t_0 );  \label{Hyp3} \\
u^0 (0) = u^b (0); \quad \p_t u^b (0) = p(0,0) \p_x u^0 (0) + f(0,0).  \label{Hyp4}
\end{gather}
Then the boundary-initial value problem
\begin{gather}
\p_t u = p(t,x) \p_x u + f(t,x) , \quad (x,t) \in  (0, t_0 )\times (0,b), \label{HypEq1} \\
u(0,x) = u^0 (x), \; x\in (0,b); \qquad u(t,0) = u^b (t), \; t\in (0,t_0 ). \label{HypEq2}
\end{gather}
has a solution
\begin{equation}
\label{SolHyp1}
    u\in C^1 ([0, t_0 ] ; H^1 (0, b)) \cap C([0, t_0 ]; H^2 (0,b) ),
\end{equation}
which is unique in $W^{1, \infty} (0, t_0 ; L^2 (0,b))\cap L^{\infty} (0, t_0 ; H^1 (0,b))$.
\end{theorem}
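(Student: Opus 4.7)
The plan is to combine the method of characteristics for existence with differentiated energy estimates to obtain the Sobolev regularity, and to rely on a standard $L^2$ energy identity for uniqueness. The sign condition $p<0$ in (\ref{Hyp2}) makes the problem genuinely noncharacteristic at the two boundaries $\{t=0\}$ and $\{x=0\}$ on which data are prescribed, and no further boundary condition is needed at $x=b$.

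First I would build the flow. Because $p\in L^\infty([0,t_0];H^2(0,b))$ embeds into $L^\infty([0,t_0];\mathrm{Lip}([0,b]))$, the ODE $d\phi/d\tau = -p(\tau,\phi)$, $\phi(s)=\xi$, defines a unique flow on $[0,t_0]\times[0,b]$. Since $-p>0$, the $x$-coordinate is strictly increasing along forward characteristics, so tracing back from any $(t,x)$ we either reach the initial line at a point $(0,x_0)$ with $x_0\in[0,b]$ or the lateral boundary at some $(s,0)$ with $s\in[0,t_0]$; the two regions are separated by the single characteristic issuing from the corner $(0,0)$. A candidate solution is then
\begin{equation*}
u(t,x) \;=\; \begin{cases} u^0(x_0) + \int_0^t f(\tau,\phi(\tau;0,x_0))\,d\tau & \text{if the back-characteristic hits } t=0,\\[2pt] u^b(s) + \int_s^t f(\tau,\phi(\tau;s,0))\,d\tau & \text{if it hits } x=0.\end{cases}
\end{equation*}
The matching condition $u^0(0)=u^b(0)$ in (\ref{Hyp4}) ensures continuity across the separating characteristic, and the second part of (\ref{Hyp4}) ensures that $\partial_t u$ matches there, so the two branches glue into a function which is $C^1$ in a distributional sense and in particular continuous in time with values in $L^2(0,b)$.

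Next I would upgrade to the Sobolev class claimed in (\ref{SolHyp1}). Formally $w:=\partial_x u$ satisfies
\begin{equation*}
\partial_t w \;=\; p\,\partial_x w + (\partial_x p)\,w + \partial_x f, \qquad w(0,x)=\tfrac{du^0}{dx}(x), \qquad w(t,0)=\tfrac{1}{p(t,0)}\bigl(\tfrac{du^b}{dt}(t)-f(t,0)\bigr),
\end{equation*}
where the boundary value is obtained by evaluating (\ref{HypEq1}) at $x=0$ and using (\ref{Hyp2}). Under (\ref{Hyp1})--(\ref{Hyp3}) this boundary datum belongs to $H^1(0,t_0)$ and the initial datum to $H^1(0,b)$, and the compatibility condition is exactly (\ref{Hyp4}). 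Multiplying by $w$ and by $\partial_x w$ (or by $-\partial_x(p\,w_x)$ appropriately) and integrating over $(0,x)\times(0,t)$, the boundary integral at $x=0$ is controlled by the $H^1$-data, the integral at $x$ supplies a positive term because $-p>0$, and the remaining terms involving $\partial_x p$ and $\partial_x f$ are absorbed by Gronwall's lemma, exactly in the spirit of the a priori estimate (\ref{AprioriEst1})--(\ref{Apriori1}) used in STEP~2 of the short-time existence proof. A second differentiation in $x$, using the regularity assumed on $\partial_x p(\cdot,0)$ and $\partial_x f(\cdot,0)$ to produce the boundary datum for $\partial_x^2 u$ at $x=0$, yields the $C([0,t_0];H^2(0,b))$ bound, and the equation (\ref{HypEq1}) then returns $\partial_t u \in C([0,t_0];H^1(0,b))$, giving (\ref{SolHyp1}). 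To justify the manipulations under the stated limited regularity I would mollify $p$, $f$, $u^0$, $u^b$ in a way compatible with (\ref{Hyp4}), run the above on the smooth problems, and pass to the limit using the uniform estimates.

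Uniqueness in $W^{1,\infty}(0,t_0;L^2(0,b))\cap L^{\infty}(0,t_0;H^1(0,b))$ follows from a plain $L^2$ estimate: if $u_1,u_2$ both solve the problem, then $U=u_1-u_2$ solves the same equation with zero data, and multiplying by $U$ and integrating in $x$ produces
\begin{equation*}
\tfrac{1}{2}\tfrac{d}{dt}\|U(t)\|_{L^2}^2 = \tfrac{1}{2}\!\int_0^b p\,\partial_x(U^2)\,dx = \tfrac{1}{2}\bigl[p\,U^2\bigr]_0^b - \tfrac{1}{2}\!\int_0^b (\partial_x p)\,U^2\,dx,
\end{equation*}
and since $U(t,0)=0$ while $p(t,b)U(t,b)^2\le 0$ by (\ref{Hyp2}), the boundary contribution is nonpositive and Gronwall closes the argument. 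The main technical obstacle is the $H^2$ step: the equation for $\partial_x^2 u$ contains a term $\partial_x^2 p\cdot\partial_x u$, and $\partial_x^2 p$ lies only in $L^\infty(L^2)\cap W^{1,\infty}(H^{-1})$, so I expect to need either an integration by parts that trades one derivative back onto the test function or a careful use of the $L^\infty$ bound on $\partial_x u$ obtained in the $H^1$ step, together with the matching of the boundary data at the corner $(0,0)$, which is precisely why the compatibility conditions (\ref{Hyp4}) and the footnoted hypotheses on $\partial_x p(\cdot,0),\partial_x f(\cdot,0)$ are needed.
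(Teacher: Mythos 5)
The paper does not prove Theorem~\ref{HagenR}: it is explicitly recalled from Hagen and Renardy \cite{HagenRenardy99} and used as a black box (in Steps~1 and~3 of the proof of Theorem~\ref{SmallTimeExist} and in Step~3 of the proof of Theorem~\ref{GlobalExistDelta}), so there is no internal proof to compare your attempt against. On its own merits, your sketch is correct in outline and is the natural route. The sign $p<0$ makes $-p>0$ the transport speed, so $\{t=0\}\cup\{x=0\}$ is the inflow set and no condition is imposed at $x=b$; the method-of-characteristics representation is well posed because $H^2(0,b)\hookrightarrow W^{1,\infty}(0,b)$ gives uniform Lipschitz control of $p$ in $x$; the two conditions in (\ref{Hyp4}) glue the two branches of the representation $C^1$ across the characteristic through the corner $(0,0)$; and the regularity (\ref{SolHyp1}) follows from differentiated $L^2$ energy estimates on $\partial_x u$ and $\partial_x^2 u$, closed by Gronwall, with the boundary data for the differentiated equations at $x=0$ extracted from the PDE itself and lying in $H^1(0,t_0)$ precisely because of (\ref{Hyp1})--(\ref{Hyp3}). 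Your uniqueness computation is fine (both terms $p(t,b)U(t,b)^2\le 0$ and $U(t,0)=0$ make the boundary contribution signed), and you correctly flag the genuine technical point, namely the term $\partial_x^2 p\,\partial_x u$ in the $H^2$ step, which is handled through $\partial_x u\in L^\infty$ (from the $H^1$ estimate and the one-dimensional Sobolev embedding) rather than through extra regularity of $p$. To make the argument rigorous one should, as you indicate, mollify $p$, $f$, $u^0$, $u^b$ in a way that preserves (\ref{Hyp4}) and pass to the limit with the uniform energy bounds.
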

%%%%%%%%%%%%%%%%%%%%%%%%%%%%%%%%%%%%%%%%%%%%%%%%%%%%%%%%%%%%%%%%%%%%

%%%%%%%%%%%%%%%%%%%%%%%%%%%%%%%%%%%%%%%%%%%%%%%%%%%%%%%%%%%%%%%%%%%%
%%%%%%%%%%%%%%%%%%%%%%%%%%%%%%%%%%%%%%%%%%%%%%%%%%%%%%%%%%%%%%%%%%%%

%%%%%%%%%%%%%%%%%%%%%%%%%%%%%%%%%%%%%%%%%%%%%%%%%%%%%%%%%%%%%%%%%%%%
%%%%%%%%%%%%%%%%%%%%%%%%%%%%%%%%%%%%%%%%%%%%%%%%%%%%%%%%%%%%%%%%%%%%

\end{document}